\definecolor{myorange}{RGB}{180,90,0}
\definecolor{mygreen}{RGB}{70,140,0}
\def\wrtext#1{\relax\ifmmode{\leavevmode\hbox{#1}}\else{#1}\fi}
\def\abs#1{\left|#1\right|}
\def\begeq{\begin{equation}}
\def\endeq{\end{equation}}
\def\part#1{\frac{\partial}{\partial #1}}
\def\norm#1{||\,#1\,||}
\newcommand{\real}{\mbox{\bf R}}
\newcommand{\comp}{\mbox{\bf C}}
\newtheorem{dref}{Definition}[section]
\newtheorem{theo}[dref]{Theorem}
\newtheorem{prop}[dref]{Proposition}
\newenvironment{proof}{\vspace{.3cm}\noindent{{\em Proof:}}}{\hfill$\Box$}
\title{A direct approach to the analytic Bergman projection}
\author{Alix \textsc{Deleporte} \footnote{Institute of Mathematics,
    University of Z\"urich, Winterthurerstrasse 190, CH-8057 Z\"urich, Switzerland, {\sf alix.deleporte@math.uzh.ch}} \and
  Michael \textsc{Hitrik} \footnote{Department of Mathematics, University of California, Los Angeles CA 90095-1555, USA, {\sf hitrik@math.ucla.edu}} \and Johannes \textsc{Sj\"ostrand}\footnote{IMB, Universit\'e de Bourgogne 9, Av. A. Savary, BP 47870
FR-21078 Dijon, France and UMR 5584 CNRS, {\sf johannes.sjostrand@u-bourgogne.fr}}}
\date{}
\begin{document}
\maketitle

\vspace*{1cm}
\noindent
{\bf Abstract}:
We develop a direct approach to the semiclassical asymptotics for Bergman projections in exponentially weighted spaces of holomorphic functions, with real analytic strictly plurisubharmonic weights. In particular, the approach does not rely upon the Kuranishi trick and it allows us to shorten and simplify proofs of a result due to~\cite{RSV} and~\cite{Del}, stating that in the analytic case, the amplitude of the asymptotic Bergman projection is a realization of a classical analytic symbol.

\section{Introduction}
\setcounter{equation}{0}
\label{sec_introduction}

Let $\Omega$ be a strictly pseudoconvex domain in $\comp^n$ and let $\Phi \in C^{\infty}(\Omega;\real)$ be a strictly plurisubharmonic function
(i.e. the Hermitian matrix $\partial\overline{\partial}\Phi$ is positive definite everywhere in $\Omega$). The study of the exponentially weighted $L^2$--space of holomorphic functions
$$
H_{\Phi}(\Omega) = \left\{u:\Omega\to \comp\text{ holomorphic}; \int_{\Omega}|u|^2e^{-\frac{2}{h}\Phi} < \infty\right\},
$$
with a small parameter $h>0$, plays a basic role in complex analysis. In particular, it serves as a local model for the space of holomorphic sections of a high power of an ample line bundle over a complex manifold. In this article, we are interested in the asymptotic description, in the semiclassical limit $h\to 0^+$, of the orthogonal projection $\Pi:L^2(\Omega; e^{-2\Phi/h})\to H_{\Phi}(\Omega)$ and its integral kernel. The Bergman projection $\Pi$ can be studied in many different ways, sharing as a common core the spectral gap property for the $\overline{\partial}$--operator on $L^2(\Omega;e^{-2\Phi/h})$, or rather for the corresponding Hodge Laplacian, as established in~\cite{Ho65}. The spectral gap implies directly that the Bergman kernel is rapidly decreasing away from the diagonal~\cite{Christ},~\cite{Delin}. The existence of a complete asymptotic expansion in powers of $h$ for the Bergman kernel has been shown in~\cite{Catlin},~\cite{Z98}, by means of a reduction to the main result of~\cite{BoSj} on the asymptotic behavior of the Szeg\H{o} kernel on the boundary of a strictly pseudoconvex smooth domain. The work~\cite{BBSj} has subsequently provided a self-contained proof of the existence of the expansion, by constructing local asymptotic Bergman kernels directly, using some of the ideas of analytic microlocal analysis, developed in~\cite{Sj82}; see also~\cite{MelSj}. Other self-contained strategies for the study of the Bergman kernel and its generalizations include~\cite{Ma_Mar},~\cite{KMM}.

\medskip
\noindent
The case of a real analytic weight $\Phi$ has been the subject of a recent intense activity~\cite{HLX,RSV,Del,Cha,HX}. In
this setting, one shows that the amplitude in the asymptotic Bergman kernel is a realization of a classical analytic symbol, in the sense
of~\cite{BoKr,Sj82}, and one can describe the Bergman projection $\Pi$ up to an exponentially small error, ${\cal O}(e^{-\frac{1}{Ch}})$, for some $C>0$. In~\cite{RSV}, an essential ingredient in the proof of this result consists in exploiting the Kuranishi trick. This ingredient is already present in~\cite{BBSj}, see the discussion following (2.7) there. The alternative strategy used in~\cite{Del,Cha,HX} is a direct verification that the coefficients in the complete expansion of the Bergman kernel amplitude form a classical analytic symbol. Both strategies become somewhat problematic when the Levi form $\partial\overline{\partial}\Phi$ of $\Phi$ becomes degenerate or nearly degenerate at a point or along a submanifold. A natural occurrence of such a behavior appears in the work in progress~\cite{HiSj}, in the context of second microlocalization. See also~\cite{Mar-Sav}. A direct approach to Bergman projections, in particular not relying upon the Kuranishi trick, is therefore desirable, and it is precisely our purpose here to develop such an approach, in the real analytic case

\medskip
\noindent
The following is the main result of this work. 
\begin{theo}\label{Theorem1}
Assume that $\Phi$ is real analytic in $\Omega$, and let $x_0\in \Omega$. There exist a unique classical analytic symbol $a(x,\widetilde{y};h)$, defined in a neighborhood of $(x_0,\overline{x_0})$, solving
\begeq
\label{eq1.1}
(A a)(x,\widetilde{y};h) = 1, 
\endeq
where $A$ is an elliptic analytic Fourier integral operator, and small open neighborhoods $U \Subset V \Subset \Omega$ of $x_0$, with $C^{\infty}$--boundaries, such that the operator
\begeq
\label{eq1.2} 
\widetilde{\Pi}_{V} u(x) = \frac{1}{h^n} \int_{V} e^{\frac{2}{h}\Psi(x,\overline{y})} a(x,\overline{y};h)  u(y) e^{-\frac{2}{h}\Phi(y)}\, L(dy)
\endeq
satisfies
\begeq
\label{eq1.3}
\widetilde{\Pi}_V - 1 = {\cal O}(1) e^{-\frac{1}{Ch}}: H_{\Phi}(V) \rightarrow H_{\Phi}(U), \quad C >0.
\endeq
Here in {\rm (\ref{eq1.2})}, the holomorphic function $\Psi$ is the polarization of $\Phi$ and $L(dy)$ is the Lebesgue measure on $\comp^n$.
\end{theo}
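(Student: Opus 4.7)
The plan is to construct $a$ and the operator $A$ simultaneously by computing $\widetilde{\Pi}_V$ on a family of coherent-state-like holomorphic functions. Since $\Phi$ is real analytic, its polarization $\Psi(x,\widetilde{y})$ is holomorphic in $(x,\widetilde{y})$ near $(x_0,\overline{x_0})$, with $\Psi(x,\overline{x}) = \Phi(x)$. This lets us reinterpret the integral defining $\widetilde{\Pi}_V u(x)$, for $u$ holomorphic, as a contour integral in $\comp^{2n}_{y,\widetilde{y}}$ over the totally real contour $\{\widetilde{y} = \overline{y}\}$, where the integrand is holomorphic in $(y,\widetilde{y})$ and contours can be deformed by Stokes' theorem.

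The natural test functions are the coherent states $u_{\widetilde{y}_0}(y) := e^{\frac{2}{h}\Psi(y,\widetilde{y}_0)}$, holomorphic in $y$ for $\widetilde{y}_0$ close to $\overline{x_0}$. Substituting into (\ref{eq1.2}) and polarizing, the phase in $(y,\widetilde{y})$ becomes $\Psi(x,\widetilde{y}) - \Psi(y,\widetilde{y}) + \Psi(y,\widetilde{y}_0)$, which by strict plurisubharmonicity of $\Phi$ has a unique nondegenerate critical point at $(y,\widetilde{y}) = (x,\widetilde{y}_0)$, with critical value $\Psi(x,\widetilde{y}_0)$. Analytic stationary phase in the sense of~\cite{Sj82} then yields
\[ \widetilde{\Pi}_V u_{\widetilde{y}_0}(x) = e^{\frac{2}{h}\Psi(x,\widetilde{y}_0)} (Aa)(x,\widetilde{y}_0;h) + {\cal O}(e^{-1/(Ch)}),\]
where $A$ is an elliptic analytic Fourier integral operator acting on symbols $a(x,\widetilde{y};h)$, with nonvanishing principal symbol coming from the inverse Hessian determinant. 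Requiring $\widetilde{\Pi}_V u_{\widetilde{y}_0} = u_{\widetilde{y}_0}$ modulo exponentially small errors produces the defining equation $(Aa) = 1$, and inverting $A$ in the analytic symbol calculus of~\cite{Sj82} yields the unique classical analytic symbol $a$.

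To promote (\ref{eq1.3}) from coherent states to arbitrary $u\in H_\Phi(V)$, I would either represent $u$ as a superposition of the $u_{\widetilde{y}_0}$ via a standard reproducing formula, or, more directly, deform the $\widetilde{y}$-contour through $\widetilde{y}=\overline{x}$ in the integral expression for $\widetilde{\Pi}_V u(x)$ and apply analytic stationary phase to the arbitrary holomorphic factor $u$. The main technical difficulty I expect lies in this last step, namely controlling two sources of exponentially small error simultaneously: the tail of the analytic stationary phase expansion on the realization of $a$, and the boundary contributions arising from deforming the contour inside $V$. The latter is exactly where the gap $U \Subset V$ and the strict positivity of the effective phase away from the critical point combine to deliver the ${\cal O}(e^{-1/Ch})$ gain, and it is also where this approach departs from the Kuranishi-based argument, working instead throughout in the polarized picture and relying directly on the analytic symbol calculus.
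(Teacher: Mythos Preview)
Your identification of the operator $A$ and the defining equation $Aa=1$ via the coherent states $u_{\widetilde{y}_0}$ is correct and lands on the same Fourier integral operator as the paper; the paper defines $A$ directly from the phase $\varphi(y,\widetilde{x};x,\widetilde{y}) = \Psi(x,\widetilde{y}) - \Psi(x,\widetilde{x}) - \Psi(y,\widetilde{y}) + \Psi(y,\widetilde{x})$ and then makes the same heuristic observation you do (see the remark following their construction of $a$). So far so good.

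The genuine gap is in the passage from coherent states to arbitrary $u\in H_\Phi(V)$. Both options you sketch are problematic as stated. The ``standard reproducing formula'' expressing $u$ as a superposition of the $u_{\widetilde{y}_0}$ is the Bergman reproducing property itself, which is exactly what you are trying to prove; using it here is circular. Your second option, deforming the $\widetilde{y}$-contour and applying analytic stationary phase with $u$ as amplitude, fails because $u$ is \emph{not} an analytic symbol: a general element of $H_\Phi(V)$ grows like $e^{\Phi/h}$, and while $u\,e^{-\Phi/h}$ is bounded, it is not holomorphic in $(y,\widetilde{y})$, so it cannot be carried through a contour deformation or absorbed into the stationary-phase amplitude. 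The exponential weight of $u$ has to be absorbed into the phase, and once you do that you no longer have a single nondegenerate critical point unless $u$ is itself coherent-state-like.

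The paper resolves this in two steps that your outline misses. First, it passes to the \emph{weak} formulation $(\widetilde{\Pi}_V u,v)_{H_\Phi(V)}$ and decomposes $v$ (taken in a strictly smaller weighted space $H_{\Phi_1}$, $\Phi_1<\Phi$ off a compact) as a superposition of localized pieces $v_z$ using a Fourier-inversion resolution of the identity --- not the Bergman kernel --- so there is no circularity. The resulting $4n$-dimensional integral is then handled by exhibiting two explicit good contours for a plurisubharmonic function $G_z$ and deforming between them; this is where the equation $Aa=1$ is actually used. Second, and this is an ingredient you do not mention at all, the paper upgrades the weak estimate to the operator-norm bound (\ref{eq1.3}) by a $\overline{\partial}$-argument: one cuts off $u$ near $x_0$, solves $\overline{\partial}w=\overline{\partial}(\chi u)$ in a weight that is strictly below $\Phi$ near $\overline{U}$ using H\"ormander's $L^2$ estimates, and applies the weak reproducing property to the holomorphic correction $\chi u - w$. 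Without this $\overline{\partial}$-surgery (or an equivalent mechanism) you will not get from reproducing on localized inputs to the uniform bound on all of $H_\Phi(V)$.
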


\medskip
\noindent
Let us point out that the general strategy of constructing the amplitude of the asymptotic Bergman projection by inverting an elliptic analytic Fourier integral operator, acting on the space of analytic symbols, was also followed in~\cite{RSV}. That work proceeded by means of the Kuranishi trick, and the Fourier integral operator in question was obtained by composing various integral transforms. In contrast to~\cite{RSV}, in Theorem \ref{Theorem1}, we bypass the use of the Kuranishi trick and construct the operator $A$ in (\ref{eq1.1}) directly. This article can therefore be regarded as an alternative to the two approaches to asymptotic Bergman kernels mentioned above, and we plan to generalize it to degenerate situations as well. It seems also that the method for determining the amplitude in the Bergman kernel, consisting of solving the equation (\ref{eq1.1}), is quite direct.

\medskip
\noindent
The plan of the paper is as follows. In Section \ref{sec_resolution-identity} we review a resolution of the identity in the $H_{\Phi}$--spaces
related to the Fourier inversion formula in the complex domain. Section \ref{sec_four-integr-oper} is devoted to the construction of a suitable analytic symbol to be used as the amplitude for the asymptotic Bergman projection. We introduce a complex phase function, with no fiber variables present, such that the corresponding canonical transformation maps the zero section to itself. Associated to it is a Fourier integral operator $A$, and we define the amplitude $a$ in (\ref{eq1.1}) as the unique classical analytic symbol of order $0$ such that $Aa=1$, locally. Then, in Section \ref{sec_reproducing-property} we show that the operator $\widetilde{\Pi}_V$ given in (\ref{eq1.2}) satisfies the reproducing property in $H_{\Phi}$, locally and in the weak formulation: for $u,v\in H_{\Phi}(\Omega)$, on a small enough set $V$ we have
$(\widetilde{\Pi}_Vu,v)_{H_{\Phi}(V)}=(u,v)_{H_{\Phi}(V)}+{\cal O}(e^{-\frac{1}{Ch}})$, provided that $v$ is small near the boundary of
$V$. The proof consists of a contour deformation argument which depends on the resolution of the identity of Section \ref{sec_resolution-identity}. The contour deformation is first justified for elements of $H_{\Phi}$ sufficiently localised near a
point, and the decomposition of Section \ref{sec_resolution-identity} ensures that, by linearity, the reproducing property is true on the
whole of $H_{\Phi}$. In Section \ref{sec:end-proof}, we conclude the proof of Theorem \ref{Theorem1} using the $\overline{\partial}$-method.

\medskip
\noindent
Once the local approximate reproducing property of Theorem \ref{Theorem1} has been established, a global version (uniformly in any compact subset of $\Omega$, or uniformly on a complex compact manifold without boundary) follows from cut-and-paste arguments and, in particular, the $L^2$--estimates for the $\overline{\partial}$--operator. Such arguments have already been developed carefully in \cite{RSV}, and in this work we shall refrain therefore from repeating that discussion. We would finally like to emphasize that the majority of the methods and the ideas in this paper stem from~\cite{Sj82}.

\medskip
\noindent
{\bf Acknowledgments}. Part of this material is based upon work supported by the National Science Foundation under Grant
No. DMS--1440140, while A.D. and M.H. were in residence at the Mathematical Sciences Research Institute in Berkeley, CA, during the Fall 2019 semester. The support of the work of A.D. by the Swiss Forschungskredit is also gratefully acknowledged.

\section{A resolution of the identity}
\label{sec_resolution-identity}
\setcounter{equation}{0}
Let $\Omega \subset \comp^n$ be open, and let $\Phi\in C^{\infty}(\Omega;\real)$ be strictly plurisubharmonic in $\Omega$: there exists $0 < c\in C(\Omega)$ such that
\begeq
\label{eq2.1}
\sum_{j,k=1}^n \frac{\partial^2 \Phi}{\partial x_j \partial \overline{x}_k}(x) \xi_j \overline{\xi}_k \geq c(x) \abs{\xi}^2, \quad x\in \Omega, \quad \xi \in \comp^n.
\endeq
Let us define the space 
\begeq
\label{eq2.4}
H_{\Phi}(\Omega) = {\rm Hol}(\Omega) \cap L^2(\Omega; e^{-2\Phi/h}L(dx)),
\endeq
equipped with its natural Hilbert space norm
\begeq
\label{eq2.4.1}
\norm{u}_{L^2_{\Phi}(\Omega)} = \left(\int_{\Omega} \abs{u(x)}^2 e^{-2\Phi(x)/h}\, L(dx)\right)^{1/2}. 
\endeq

\bigskip
\noindent
Let $x_0\in \Omega$ and let $V \Subset \Omega$ be an open neighborhood of $x_0$ with $C^{\infty}$--boundary. The strict plurisubharmonicity
of $\Phi$ has the following consequence.

\begin{prop}
\label{prop:good-contour}
There exists a small neighborhood $V\Subset \Omega$ of $x_0$, with $C^{\infty}$--boundary, such that the $2n$-dimen\-sional manifold $\Lambda(x) \subset \comp^{2n}_{y,\theta}$ given by
\begeq
\label{eq2.5}
\theta = \theta(x,y) = \frac{2}{i} \left(\frac{\partial \Phi}{\partial y}(y) + \frac{1}{2}\Phi''_{yy}(y)(x-y)\right), \quad y\in V,
\endeq
is a good contour for the plurisubharmonic function $(y,\theta) \mapsto -{\rm Im}\, ((x-y)\cdot \theta) + \Phi(y)$, for $x\in V$, in the sense of
{\rm \cite[Chapter 3]{Sj82}}: it is maximally totally real and such that there exists $\delta >0$ such that for all $x,y\in V$, we have
\begeq
\label{eq2.6}
-{\rm Im}\, ((x-y)\cdot \theta) + \Phi(y) \leq \Phi(x) - \delta \abs{x-y}^2.
\endeq
Moreover, the contour $\Lambda(x)$ depends holomorphically on $x \in V$.
\end{prop}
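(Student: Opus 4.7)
I would split the proposition into three essentially independent verifications: holomorphic dependence of $\Lambda(x)$ on $x$, the pointwise inequality (\ref{eq2.6}), and the maximally totally real character of $\Lambda(x)$. Holomorphic dependence is immediate by inspection of (\ref{eq2.5}): the right-hand side is affine in $x$, with $x$-derivative the $y$-dependent matrix $\frac{1}{i}\Phi''_{yy}(y)$, so $x\mapsto\theta(x,y)$ is holomorphic on $\Omega$ for each fixed $y\in V$, and $\Lambda(x)$, parametrized by $y\mapsto(y,\theta(x,y))$, depends holomorphically on $x$.

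For \textbf{the inequality (\ref{eq2.6})}, I substitute the defining expression of $\theta$ into $-\Im((x-y)\cdot\theta)$. Using $-\Im(\frac{2}{i}z)=2\Re z$, one obtains
$$-\Im\bigl((x-y)\cdot\theta\bigr)+\Phi(y) \,=\, \Phi(y)+2\Re\bigl((x-y)\cdot\partial_y\Phi(y)\bigr)+\Re\bigl(\Phi''_{yy}(y)(x-y)\cdot(x-y)\bigr).$$
Comparing with the real second-order Taylor expansion of the real-valued $\Phi$ at $y$, the two holomorphic quadratic pieces cancel---this cancellation is the \emph{raison d'\^etre} of the formula (\ref{eq2.5})---leaving
$$\Phi(x)-\bigl[-\Im((x-y)\cdot\theta)+\Phi(y)\bigr] \,=\, \Phi''_{y\bar y}(y)(x-y)\cdot\overline{(x-y)}+O(|x-y|^3).$$
By (\ref{eq2.1}), the principal term is bounded below by $c_0|x-y|^2$ uniformly on a compact neighborhood of $x_0$, for some $c_0>0$; for $V$ sufficiently small the cubic remainder is dominated by half of this, yielding (\ref{eq2.6}) with $\delta=c_0/2$.

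For \textbf{maximal total reality}, $\Lambda(x)$ is the graph of a smooth $\comp^n$-valued map, so it has real dimension $2n$, equal to the complex dimension of $\comp^{2n}$. Its tangent space at $(y_0,\theta(x,y_0))$ is $T=\{(v,\partial_y\theta\,v+\partial_{\bar y}\theta\,\bar v):v\in\comp^n\}$, and a short direct computation (solving $(v_1,\cdot)=i(v_2,\cdot)$ inside $T$) shows $T\cap iT=\{0\}$ if and only if the complex $n\times n$ matrix $\partial_{\bar y}\theta$ is invertible. From (\ref{eq2.5}),
$$\partial_{\bar y}\theta(x,y) \,=\, \frac{2}{i}\Phi''_{y\bar y}(y)+O(|x-y|),$$
and $\Phi''_{y\bar y}(y)$ is invertible by strict plurisubharmonicity; shrinking $V$ once more preserves invertibility for all $x,y\in V$.

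The only conceptual---as opposed to purely mechanical---step is the choice of $\theta$ itself, which is designed precisely to cancel the holomorphic quadratic terms in the Taylor expansion of $\Phi$ and leave the positive Hermitian form $\Phi''_{y\bar y}$ as leading contribution; every other verification reduces to the second-order Taylor formula together with the quantitative positivity (\ref{eq2.1}). In particular, nothing here uses real-analyticity of $\Phi$; the analyticity assumption will only enter further downstream, when $A$ and the classical analytic symbol $a$ are constructed.
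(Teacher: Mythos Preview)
Your argument is correct. For the estimate (\ref{eq2.6}) you proceed exactly as the paper does---Taylor's formula together with the positivity (\ref{eq2.1})---and the holomorphic dependence on $x$ is indeed immediate. The one genuine difference is in the verification that $\Lambda(x)$ is maximally totally real. The paper does not compute $\partial_{\bar y}\theta$ directly; instead it invokes a general principle from \cite[Proposition 3.1]{Sj82}: if $q$ is a plurisubharmonic quadratic form on $\comp^{2n}$ and $L$ is a real $2n$-dimensional subspace on which $q$ is negative definite, then $L$ is maximally totally real. Since the function $(y,\theta)\mapsto -\Im((x-y)\cdot\theta)+\Phi(y)$ is plurisubharmonic and, by the already-established (\ref{eq2.6}), its Hessian is negative definite along the tangent space to $\Lambda(x)$ at the critical point, the abstract lemma applies. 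Your route---reducing $T\cap iT=\{0\}$ to the invertibility of $\partial_{\bar y}\theta=\frac{2}{i}\Phi''_{y\bar y}+O(|x-y|)$---is more hands-on and self-contained, at the cost of a short linear-algebra computation; the paper's route is shorter on the page but outsources the work to \cite{Sj82}. Either way, the underlying reason is the same: strict plurisubharmonicity of $\Phi$ forces the nondegeneracy.
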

\begin{proof}
The estimate \eqref{eq2.6} is a direct consequence of \eqref{eq2.1} and Taylor's formula. To see that the $2n$-dimensional $C^{\infty}$--submanifold $\Lambda(x)$ (with $C^{\infty}$--boundary) is maximally totally real, we use the following general observation: let $q$ be a plurisubharmonic quadratic form on $\comp^n$, and let $L \subset \comp^n$ be a real linear subspace of dimension $n$ such that $q|_L$ is negative definite. Then $L$ is maximally totally real, see~\cite[Proposition 3.1]{Sj82}.
\end{proof}

\bigskip
\noindent
Let $V_1 \Subset V_2 \Subset V$ be open neighborhoods of $x_0$ and let $\chi \in C^{\infty}_0(V; [0,1])$ be such that $\chi = 1$ near $\overline{V_2}$. Following~\cite[Chapter 3]{Sj82},~\cite{BBSj}, we have the following result, representing the identity operator on $H_{\Phi}(V)$ as a pseudodifferential operator in the anti-classical quantization.

\begin{prop}
\label{Fourier_inv}
Let $V_1$ and $\Lambda(x)$ be as above. There exists $\eta > 0$ such that when $u\in H_{\Phi}(V)$, we have for $x\in V_1$,
\begeq
\label{eq2.7}
u(x) = \frac{1}{(2 \pi h)^n} \int\!\!\!\int_{\Lambda(x)} e^{\frac{i}{h}(x-y)\cdot \theta} u(y)\chi(y)\, dy\,d\theta + {\cal O}(1)\norm{u}_{L^2_{\Phi}(V)} e^{\frac{1}{h}(\Phi(x) - \eta)}.
\endeq
Here it is assumed that the contour $\Lambda(x)$ has been equipped with a suitable orientation.
\end{prop}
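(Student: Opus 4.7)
\medskip
\noindent
\textbf{Proof plan.} The proof is a version of the Fourier inversion argument in the complex domain from~\cite[Chapter 3]{Sj82}, to which I would refer for the full details. The strategy is to interpret the right-hand side of~(\ref{eq2.7}) as an oscillatory integral on the good contour $\Lambda(x)$, use~(\ref{eq2.6}) to concentrate the integral in a neighborhood of $y = x$, and then to reduce locally to the Bargmann--Fock reproducing formula.

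I would begin with the pointwise estimate on $\Lambda(x)$,
$$\abs{e^{\frac{i}{h}(x-y)\cdot\theta(x,y)}}\,e^{-\Phi(y)/h} \leq e^{\frac{1}{h}(\Phi(x) - 2\Phi(y) - \delta\abs{x-y}^2)},$$
coming directly from~(\ref{eq2.6}). Applied with Cauchy--Schwarz against the weight $e^{-2\Phi(y)/h}\,L(dy)$, this bounds the portion of the integral over $\abs{y - x} \geq \epsilon$ by $C\,\norm{u}_{L^2_\Phi(V)}\, e^{(\Phi(x) - \eta)/h}$ for some $\eta = \eta(\epsilon) > 0$. In particular, the contribution from $\supp(1-\chi)$, which is bounded away from $V_1$, is absorbed into the error, and the analysis reduces to an integral over $\{\abs{y-x} < \epsilon\} \cap \Lambda(x)$, on which $\chi \equiv 1$.

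For this local integral, a direct Taylor expansion of~(\ref{eq2.5}) yields
$$i(x-y)\cdot\theta(x,y) = 2\Psi(x,\overline{y}) - 2\Phi(y) + O(\abs{x-y}^3),$$
where $\Psi$ is the polarization of $\Phi$; moreover the $(2n,0)$-form $dy\wedge d\theta$, pulled back to $\Lambda(x)$ via the parametrization by $y$, restricts at $y = x$ to the non-degenerate volume form $(2/i)^n\det(\Phi''_{y\overline{y}}(x))\,L(dy)$. Comparison with the explicit Bargmann--Fock reproducing formula for the quadratic approximation of $\Phi$ at $x$ then identifies the local integral with $u(x)$, up to a remainder coming from the cubic error in the phase and from the higher-order variation of the Jacobian.

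The main obstacle is to show that this remainder is in fact of order $e^{(\Phi(x)-\eta)/h}$ rather than merely a power of $h$. The cleanest argument is a contour-deformation estimate: $\Lambda(x)$ and the Bargmann--Fock contour of the quadratic model at $x$ are connected through a one-parameter family of good contours $\Lambda_s(x)$, each satisfying a uniform version of~(\ref{eq2.6}), and the difference of the two integrals, obtained by Stokes' theorem on the resulting $(2n+1)$-chain, is controlled pointwise by $e^{(\Phi(x) - \eta)/h}$ thanks to the uniform decay of the integrand. The $\overline{\partial}$-contribution coming from the fact that $\chi u$ is not holomorphic is handled separately, being supported on $\supp(\overline{\partial}\chi) \subset V \setminus \overline{V_2}$, which is again bounded away from $V_1$. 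Orientation conventions and the precise form of the Jacobians are treated as in~\cite[Chapter 3]{Sj82} and~\cite{BBSj}.
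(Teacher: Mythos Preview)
Your plan is workable in principle but takes a noticeably more circuitous route than the paper's, and the presentation mixes two different reductions in a way that obscures what is actually doing the work.

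The paper's proof is a single contour deformation. One applies Stokes' theorem to the $(2n,0)$--form $\frac{1}{(2\pi h)^n}e^{\frac{i}{h}(x-y)\cdot\theta}u(y)\chi(y)\,dy\wedge d\theta$ over the $(2n+1)$--chain
\[
V\times[0,s]\ni(y,\lambda)\ \longmapsto\ \bigl(y,\ \theta(x,y)+i\lambda(\overline{x-y})\bigr),
\]
and then lets $s\to\infty$. Along each slice $\lambda=\mathrm{const}$, the phase picks up the extra factor $e^{-\lambda\abs{x-y}^2/h}$, so the top face $\lambda=s$ concentrates at $y=x$ and the limit is $u(x)$ by an explicit Gaussian integral; the $d$ of the form contributes only through $u\,\overline{\partial}\chi$, supported in $V\setminus\overline{V_2}$, hence away from $x\in V_1$ and exponentially small by (\ref{eq2.6}). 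No appeal to the Bargmann--Fock reproducing kernel, and no separate treatment of the quadratic model, is needed.

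Your approach instead reduces to the quadratic case: deform $\Lambda(x)$ to the contour of the quadratic approximation of $\Phi$ at $x$, and invoke the explicit Bargmann--Fock identity there. This can be carried out, but it requires (i) verifying the local Bargmann--Fock reproducing property separately, and (ii) checking that the linear homotopy between $\theta(x,\cdot)$ and its quadratic replacement stays good in the sense of (\ref{eq2.6}). Both are routine, but they are extra steps that the paper's deformation to infinity bypasses entirely. Your intermediate paragraph, which Taylor-expands the phase on $\Lambda(x)$ to $2\Psi(x,\overline y)-2\Phi(y)+O(\abs{x-y}^3)$ and then speaks of ``comparison with Bargmann--Fock'', is not really used once you decide to deform contours; it is better dropped, since a cubic error in the exponent gives only $O(h^{1/2})$ corrections and does not by itself yield the exponential accuracy you claim.

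One further point: the proposition lives in Section~\ref{sec_resolution-identity}, where $\Phi$ is only assumed $C^\infty$; the polarization $\Psi$ is introduced later under the real-analyticity hypothesis. Your argument should therefore be phrased purely in terms of the second-order Taylor polynomial of $\Phi$ at $x$, not in terms of $\Psi$.
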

\begin{proof}
Following~\cite{BBSj}, the proof proceeds by applying the Stokes formula to the $(2n,0)$--form
$$
\frac{1}{(2 \pi h)^n} e^{\frac{i}{h}(x-y)\cdot \theta} u(y)\chi(y)\, dy\,\wedge d\theta,
$$
integrated over the (oriented) boundary of the $(2n+1)$--dimensional chain given by
$$
V \times [0,s] \ni (y,\lambda) \mapsto (y,\theta(x,y) + i\lambda(\overline{x-y}))\in \comp^{2n}_{y,\theta},
$$
and letting $s\rightarrow \infty$.
\end{proof}

\medskip
\noindent
{\it Remark.} In particular, the resolution of identity given by (\ref{eq2.7}) is valid for $u\in H_{\Phi}(\Omega)$.

\bigskip
\noindent
It follows from Proposition \ref{Fourier_inv} that, for some $\eta>0$ and for all $u\in H_{\Phi}(V)$, we have 
\begeq
\label{eq2.8}
u(x) = \int_V u_y(x)\, dy\, d\overline{y} + {\cal O}(1) \norm{u}_{L^2_{\Phi}(V)} e^{\frac{1}{h}(\Phi(x) - \eta)}, \quad x\in V_1,
\endeq
with
\begeq
\label{eq2.9}
u_y(x) = \frac{1}{(2 \pi h)^n} e^{\frac{i}{h}(x-y)\cdot \theta(x,y)} u(y)\chi(y) {\rm det}\,(\partial_{\overline{y}}\theta(x,y)) \in 
H_{F_y}(V),
\endeq
where $F_y$ is strictly plurisubharmonic such that 
\begeq
\label{eq2.10}
F_y(x) \leq \Phi(x) - \delta \abs{x-y}^2, \quad \delta > 0.
\endeq

\medskip
\noindent
We conclude this section with a pointwise estimate for elements of $H_{\Phi}(V)$. 

\begin{prop}
\label{prop:ctrl_ptwise}
Let $V_1\Subset V\Subset \Omega$. Then there exists $C>0$ such that for all $u\in H_{\Phi}(V)$ and for all $h\in (0,1]$, we have 
\begeq
\label{eq2.11}
\sup_{V_1}|ue^{-\Phi/h}|\leq Ch^{-n}\|u\|_{H_{\Phi}(V)}. 
\endeq
\end{prop}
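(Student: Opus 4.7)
My plan is to deduce the pointwise bound from the $L^2$ bound via the mean value property for holomorphic functions, applied on a ball whose radius I choose proportional to $h$.

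Since $V_1 \Subset V$, there is $r_0>0$ with $B(x,r_0)\subset V$ for every $x\in V_1$. For $h\in(0,r_0]$, the ball $B(x,h)$ lies in $V$, and the mean value property for the holomorphic function $u$ combined with the Cauchy--Schwarz inequality gives
$$
|u(x)|^2 \leq \frac{1}{|B(x,h)|} \int_{B(x,h)} |u(y)|^2 \, L(dy).
$$
I then write $|u(y)|^2 = \left(|u(y)|^2 e^{-2\Phi(y)/h}\right) e^{2\Phi(y)/h}$. Since $\Phi$ is smooth and hence Lipschitz on $\overline{V}$ with some constant $L$, on the ball $B(x,h)$ one has $\Phi(y)\leq\Phi(x)+Lh$, so $e^{2\Phi(y)/h}\leq e^{2L} e^{2\Phi(x)/h}$ there. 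Pulling this factor out of the integral yields
$$
|u(x)|^2 e^{-2\Phi(x)/h} \leq \frac{e^{2L}}{|B(x,h)|} \|u\|_{H_\Phi(V)}^2.
$$
Since $|B(x,h)| = c_n h^{2n}$ in $\comp^n$, taking square roots gives the desired $Ch^{-n}$ bound uniformly in $x\in V_1$ for $h\in(0,r_0]$. For $h\in(r_0,1]$, the same argument carried out with the fixed radius $r_0$ in place of $h$ produces a bound uniform in $h$, which since $h$ is bounded below by $r_0>0$ is absorbed into $Ch^{-n}$ after enlarging $C$.

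The one mildly delicate point is the $h$-dependent choice of radius: with a fixed radius $r$ independent of $h$, the exponential factor would be $e^{2Lr/h}$, which blows up as $h\to 0^+$; taking $r=h$ keeps that factor bounded, at the price of a volume factor of order $h^{2n}$, which accounts for the $h^{-n}$ loss in the final estimate. (A sharper bound of order $h^{-n/2}$ would follow from the same approach applied to the holomorphic function $v(y) = u(y) e^{-2\Psi(y,\overline{x})/h}$ on a ball of radius $\sqrt{h}$, using that $\Phi(y)+\Phi(x)-2\,{\rm Re}\,\Psi(y,\overline{x}) = O(|y-x|^2)$, but the cruder $h^{-n}$ stated in the proposition suffices in the sequel.)
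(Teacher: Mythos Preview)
Your proof is correct and follows essentially the same approach as the paper: apply the mean value property on a ball of radius $h$, use the Lipschitz continuity of $\Phi$ to control $e^{(\Phi(y)-\Phi(x))/h}$ by a constant, and apply Cauchy--Schwarz to pass from the $L^1$ to the $L^2$ norm, picking up the volume factor $h^{2n}$. The only cosmetic difference is that you square first and then apply Cauchy--Schwarz, whereas the paper applies Cauchy--Schwarz to the linear mean value identity; your explicit treatment of the range $h\in(r_0,1]$ and the remark on the sharper $h^{-n/2}$ bound are small additions not present in the paper's proof.
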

\begin{proof}
A holomorphic function is equal to its mean value over an open ball, so that, for all $x\in V_1$ and all $h>0$ small enough so that
$B(x,h)\subset V$, we have  
$$
u(x) = \frac{C_n}{h^{2n}}\int_{|y-x|<h} u(y)\,L(dy). 
$$
Here $C_n >0$ depends on $n$ only. It follows that 
\begin{multline*}
\abs{u(x)} e^{-\Phi(x)/h} \leq \frac{C_n}{h^{2n}}\int_{|y-x|<h} \abs{u(y)} e^{-\Phi(x)/h}\, L(dy) \\
\leq \sup_{|y-x|<h}e^{(\Phi(y)-\Phi(x))/h} \frac{C_n}{h^{2n}}\int_{|y-x|<h}\abs{u(y)} e^{-\Phi(y)/h}\, L(dy) \\
\leq \frac{C'}{h^{2n}}\|u\|_{H_{\Phi}(V)}\|1\|_{L^2(B(x,h))} \leq \frac{C'}{h^n}\|u\|_{H_{\Phi}(V)}.
\end{multline*}
\end{proof}

\section{A Fourier integral operator with complex phase}
\label{sec_four-integr-oper}
\setcounter{equation}{0}
Assume that the strictly plurisubharmonic function $\Phi$ is real analytic in $\Omega$, and let $x_0 \in \Omega$. Associated to $\Phi$ is the polarization $\Psi(x,y)$, which is the unique holomorphic function of $(x,y)\in {\rm neigh}((x_0,\overline{x_0}),\comp^{2n})$ such that
\begeq
\label{eq3.1}
\Psi(x,\overline{x}) = \Phi(x),\quad x\in {\rm neigh}(x_0,\comp^n).
\endeq
The matrix $\Psi''_{xy}(x_0,\overline{x_0}) = \Phi''_{x \overline{x}}(x_0)$ is non-singular and the following classical estimate,
\begeq
\label{eq3.2}
\Phi(x) + \Phi(y) - 2{\rm Re}\, \Psi(x,\overline{y}) \asymp \abs{x-y}^2, \quad x,y\in {\rm neigh}(x_0,\comp^n),
\endeq
is implied by the strict plurisubharmonicity of $\Phi$, see for instance~\cite{RSV}.

\medskip
\noindent
Let us set
\begeq
\label{eq3.3}
\varphi(y,\widetilde{x}; x,\widetilde{y}) = \Psi(x,\widetilde{y}) - \Psi(x,\widetilde{x}) - \Psi(y,\widetilde{y}) + \Psi(y,\widetilde{x}).
\endeq
We have $\varphi\in {\rm Hol}({\rm neigh}((x_0,\overline{x}_0;x_0,\overline{x}_0),\comp^{4n}))$. Furthermore, at the point $(x_0,\overline{x_0}; x_0, \overline{x_0})$, the $2n\times 2n$--matrix of second derivatives
\begeq
\label{eq3.4}
\varphi''_{(y,\widetilde{x}), (x,\widetilde{y})} = \left( \begin{array}{ccc}
\varphi''_{yx} & \varphi''_{y\widetilde{y}} \\\
\varphi''_{\widetilde{x}x} & \varphi''_{\widetilde{x}\widetilde{y}}
\end{array} \right) = \left( \begin{array}{ccc}
0 & -\Psi''_{y\widetilde{y}}(y,\widetilde{y}) \\\
-\Psi''_{\widetilde{x} x}(x,\widetilde{x}) & 0
\end{array} \right)
\endeq
is invertible; thus this matrix is non-degenerate in a neighbourhood of $(x_0,\overline{x_0}; x_0, \overline{x_0})$. Therefore,
$\varphi(y,\widetilde{x};x,\widetilde{y})$ is a generating function for the canonical transformation
\begeq
\label{eq3.5}
\kappa: \left(x,\widetilde{y}; -\frac{2}{i} \partial_x \varphi, -\frac{2}{i} \partial_{\widetilde{y}} \varphi\right) \mapsto \left(y,\widetilde{x}; \frac{2}{i} \partial_y \varphi, \frac{2}{i} \partial_{\widetilde{x}} \varphi\right).
\endeq
\begin{prop}
\label{can_transf}
The canonical transformation $\kappa$ maps the zero section to the zero section, and we have
\begeq
\label{eq3.6}
{\rm det}\, \varphi''_{(x,\widetilde{y}),(x,\widetilde{y})}(x_0, \overline{x}_0; x_0, \overline{x}_0) \neq 0.
\endeq
\end{prop}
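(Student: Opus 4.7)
The proposition is an infinitesimal computation at the base point $(x_0, \overline{x}_0; x_0, \overline{x}_0)$, and the plan is to read off both assertions directly from the explicit form of $\varphi$ in (\ref{eq3.3}). The key observation is that $\varphi$ is an alternating combination of values of $\Psi$ at the four corners of a ``box'' in argument space, so many of its derivatives cancel on the diagonal where opposite corners coincide.

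For the first assertion, I would compute
\[
\partial_x \varphi = \partial_x \Psi(x, \widetilde{y}) - \partial_x \Psi(x, \widetilde{x}), \qquad \partial_{\widetilde{y}} \varphi = \partial_{\widetilde{y}} \Psi(x, \widetilde{y}) - \partial_{\widetilde{y}} \Psi(y, \widetilde{y}).
\]
By (\ref{eq3.5}), a point of the source zero section is characterized by the vanishing of these two quantities. Using that $\Psi''_{x\widetilde{y}}(x_0, \overline{x}_0) = \Phi''_{x\overline{x}}(x_0)$ is non-singular (by strict plurisubharmonicity), the implicit function theorem shows that near the reference point the two equations above have the unique local solution $\widetilde{x} = \widetilde{y}$ and $y = x$. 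On this diagonal, the same pattern of cancellation gives
\[
\partial_y \varphi = -\partial_y \Psi(y,\widetilde{y}) + \partial_y \Psi(y,\widetilde{x}) = 0, \qquad \partial_{\widetilde{x}} \varphi = -\partial_{\widetilde{x}} \Psi(x,\widetilde{x}) + \partial_{\widetilde{x}} \Psi(y,\widetilde{x}) = 0.
\]
Hence the target momenta also vanish, so $\kappa$ sends the zero section to itself, and in fact fixes it pointwise.

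For the second assertion, I would differentiate once more and evaluate at $(x_0, \overline{x}_0; x_0, \overline{x}_0)$. The two diagonal blocks $\partial_x^2 \varphi = \Psi''_{xx}(x,\widetilde{y}) - \Psi''_{xx}(x,\widetilde{x})$ and $\partial_{\widetilde{y}}^2 \varphi = \Psi''_{\widetilde{y}\widetilde{y}}(x,\widetilde{y}) - \Psi''_{\widetilde{y}\widetilde{y}}(y,\widetilde{y})$ each consist of the same Hessian evaluated at coinciding arguments, and so they vanish at the base point. The off-diagonal block is $\partial_x \partial_{\widetilde{y}}\varphi = \Psi''_{x\widetilde{y}}(x,\widetilde{y})$, which at the base point equals $\Phi''_{x \overline{x}}(x_0)$, non-singular by (\ref{eq2.1}). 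Thus
\[
\varphi''_{(x,\widetilde{y}),(x,\widetilde{y})}(x_0,\overline{x}_0;x_0,\overline{x}_0) = \begin{pmatrix} 0 & \Psi''_{x\widetilde{y}}(x_0,\overline{x}_0) \\ \Psi''_{x\widetilde{y}}(x_0,\overline{x}_0)^{\top} & 0 \end{pmatrix},
\]
whose determinant equals $(-1)^n (\det \Phi''_{x\overline{x}}(x_0))^2 \neq 0$.

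The argument is essentially algebraic and I do not expect any real obstacle; the only thing to track carefully is which of the four variables each occurrence of $\Psi$ depends on, so that the cancellations on the diagonal are recorded correctly and the transpose relation between the two off-diagonal blocks is handled with the right sign.
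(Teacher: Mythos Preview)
Your proof is correct. The first assertion is handled just as in the paper: one checks that $\partial_x\varphi=0$ and $\partial_{\widetilde y}\varphi=0$ force $\widetilde x=\widetilde y$, $y=x$ (using invertibility of $\Psi''_{x\widetilde y}$), and that on this diagonal the remaining first derivatives vanish as well.

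For the second assertion your route differs from the paper's. You compute the Hessian block by block directly from the explicit formula (\ref{eq3.3}): the diagonal blocks $\varphi''_{xx}$ and $\varphi''_{\widetilde y\widetilde y}$ are differences of identical $\Psi$-Hessians and hence vanish at the base point, while the off-diagonal block is $\Psi''_{x\widetilde y}(x_0,\overline{x}_0)=\Phi''_{x\overline x}(x_0)$, which is invertible. The paper instead abstracts to a general generating function $\varphi(z,w)$ satisfying (\ref{eq3.7}) and argues geometrically: if $\varphi''_{ww}\delta_w=0$ then $d\kappa(0,0)$ sends the zero-section tangent vector $(\delta_w,0)$ to $(0,\varphi''_{zw}\delta_w)$, which must again be tangent to the zero section, forcing $\varphi''_{zw}\delta_w=0$ and hence $\delta_w=0$. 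Your computation is shorter and more elementary for this specific $\varphi$; the paper's argument has the advantage of proving the general fact that any generating function whose canonical transformation preserves the zero section automatically has non-degenerate fiber Hessian, without reference to the particular structure of $\varphi$.
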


\begin{proof}
Using the invertibility of $\Psi''_{x\widetilde{x}}(x_0,\overline{x}_0)$ and (\ref{eq3.3}), we see that $\partial_x \varphi =
0 \Longleftrightarrow \widetilde{y} = \widetilde{x}$, as well as $\partial_{\widetilde{y}} \varphi = 0 \Longleftrightarrow x = y$, and therefore the unique critical point of $\varphi$ with respect to the variables $(x,\widetilde{y})$ is given by $x = y$, $\widetilde{y} = \widetilde{x}$. The corresponding critical value is equal to $0$. When proving the proposition, we may therefore simplify the notation by considering a holomorphic function $\varphi(z,w)$ defined near $(0,0)$ in $\comp^{2m}$, such that
\begeq
\label{eq3.7}
{\rm det}\,\varphi''_{zw}(0,0)\neq 0, \quad \varphi'_w(z,w) = 0 \Longleftrightarrow w = z, \quad \varphi(z,z) = 0.
\endeq
It follows that
\begeq
\label{eq3.8}
\varphi'_z(z,z) = \partial_z \left(\varphi(z,z)\right) = 0,
\endeq
and therefore the canonical transformation
\begeq
\label{eq3.9}
\kappa: (w, -\partial_w \varphi(z,w)) \mapsto  (z, \partial_z \varphi(z,w))
\endeq
maps the zero section $\{\eta = 0\}$ to the zero section $\{\xi = 0\}$. It only remains to check that ${\rm det}\, \varphi''_{ww}(0,0) \neq 0$, and to this end we observe that the differential of $\kappa$ at $(0,0)$ is given by
\begeq
\label{eq3.10}
(\delta_w, - \varphi''_{wz}\delta_z - \varphi''_{ww}\delta_w) \mapsto (\delta_z, \varphi''_{zz}\delta_z + \varphi''_{zw}\delta_w), \quad \varphi'' = \varphi''(0,0),
\endeq
where $\delta_z$ and $\delta_w$ are infinitesimal increments.
If $\varphi''_{ww}\delta_w = 0$, we get $d\kappa(0,0): (\delta_w,0) \mapsto (0, \varphi''_{zw} \delta_w)$, and it follows that $\delta_w = 0$.
\end{proof}

\bigskip
\noindent
We now introduce an elliptic analytic Fourier integral operator $A$ in the complex domain, defined in a neighbourhood of
$(x_0,\overline{x_0})$. This Fourier integral operator is associated to the canonical transformation $\kappa$ in (\ref{eq3.5}) and acts on the
space of analytic symbols $H_0^{{\rm loc}}$, defined in a neighborhood of $(x_0,\overline{x_0})$. Here we recall that the space of analytic symbols $H_0^{{\rm loc}}$ has been introduced in~\cite[Chapter 1]{Sj82}. To this end, for $(y,\tilde{x})$ in a neighbourhood of
$(x_0,\overline{x_0})$, we let $\Gamma(y,\widetilde{x}) \subset \comp^{2n}_{x,\widetilde{y}}$ be a good contour for the pluriharmonic
phase function $(x,\widetilde{y}) \mapsto {\rm Re}\, \varphi(y,\widetilde{x}; x,\widetilde{y})$, so that
$\Gamma(y,\widetilde{x})$ is a $2n$-dimensional contour passing through the critical point $(y,\widetilde{x})$ and depending holomorphically on $(y,\widetilde{x})$, such that along $\Gamma(y,\widetilde{x})$ we have
\begeq
\label{eq3.11}
{\rm Re}\, \varphi(y,\widetilde{x}; x,\widetilde{y}) \leq -\frac{1}{C}\abs{x-y}^2 - \frac{1}{C}\abs{\widetilde{y} - \widetilde{x}}^2.
\endeq
Given an analytic symbol $u(x,\tilde{y};h)$ defined near $(x_0,\overline{x_0})$, we set
\begeq
\label{eq3.12}
(Au)(y,\widetilde{x};h) = \frac{1}{h^n} \int\!\!\!\int_{\Gamma(y,\widetilde{x})} e^{\frac{2}{h} \varphi(y,\widetilde{x}; x,\widetilde{y})} u(x,\widetilde{y};h) dx d\widetilde{y},
\endeq
so that $Au$ is an analytic symbol defined in a neighborhood of $(x_0,\overline{x_0})$.

\medskip
\noindent
Before stating the main result of this section, following~\cite[Chapter 1]{Sj82}, let us recall the notion of a classical analytic symbol. Let $V \subset \comp^n$ be open, $a_k\in {\rm Hol}(V)$, $k=0,1,\ldots\,$, and assume that for every $\widetilde{V} \Subset V$, there exists $C = C_{\widetilde{V}} > 0$ such that
\begeq
\label{eq3.13}
\abs{a_k(x)} \leq C^{k+1} k^k,\quad x\in \widetilde{V}.
\endeq
The series $a(x;h) = \sum_{k=0}^{\infty} a_k(x) h^k$ is called a formal classical analytic symbol of order zero. We have a realization of $a$ on $\widetilde{V}$ given by
\begeq
\label{eq3.14}
a_{\widetilde{V}}(x;h) = \sum_{0 \leq k \leq (C_{\widetilde{V}} e h)^{-1}} a_k(x) h^k,
\endeq
so that $a_{\widetilde{V}}\in {\rm Hol}(\widetilde{V})$, $\abs{a_{\widetilde{V}}(x;h)} \leq C_{\widetilde{V}} e/(e-1)$.

\begin{prop}\label{prop:constr-a}
There is a unique classical analytic symbol of order zero $a(x,\tilde{y};h)$, defined in a neighbourhood of $(x_0,\overline{x_0})$
such that
\begeq
\label{eq3.14.1}
(Aa)(y,\widetilde{x};h) = 1+{\cal O}(e^{-\frac{1}{Ch}}),
\endeq
near $(x_0,\overline{x_0})$.
\end{prop}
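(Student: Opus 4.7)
The plan is to exploit the structural consequence of Proposition~\ref{can_transf}: since $\kappa$ preserves the zero section, the phase $\varphi(y,\widetilde{x};x,\widetilde{y})$ in (\ref{eq3.12}) has a unique critical point in $(x,\widetilde{y})$, located at the diagonal $(x,\widetilde{y})=(y,\widetilde{x})$, with critical value $0$ and non-degenerate Hessian (cf.\ (\ref{eq3.6})). In other words, $A$ is an elliptic analytic pseudodifferential operator in disguise, and I would solve $Aa=1$ by analytic stationary phase, following the standard inversion procedure for an elliptic analytic symbol.

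First I would apply formal stationary phase to $(Aa)$ with $a \sim \sum_{j\geq 0} a_j(x,\widetilde{y})h^j$, producing an expansion
$$
(Aa)(y,\widetilde{x};h) \sim \sum_{k\geq 0} h^k \bigl[c_0(y,\widetilde{x})\,a_k(y,\widetilde{x}) + L_k(a_0,\dots,a_{k-1})(y,\widetilde{x})\bigr],
$$
in which $c_0$ is a non-vanishing holomorphic function, arising from the Gaussian integral over the contour $\Gamma(y,\widetilde{x})$ and non-vanishing by (\ref{eq3.6}), and $L_k$ is a linear holomorphic differential operator of order $\leq 2k$, with the convention $L_0=0$. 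Imposing $c_0 a_0 = 1$ and, recursively, $c_0 a_k = -L_k(a_0,\dots,a_{k-1})$ uniquely determines all $a_k \in {\rm Hol}({\rm neigh}((x_0,\overline{x_0})))$, which yields both formal existence and formal uniqueness. Uniqueness at the level of realizations is then immediate: if two realizations both satisfy (\ref{eq3.14.1}), the same recursion applied to the difference forces every formal coefficient to vanish, so the two symbols agree modulo ${\cal O}(e^{-1/Ch})$.

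The hard part will be the analytic control of the coefficients $a_k$, namely the bounds $|a_k(x,\widetilde{y})|\leq C^{k+1}k^k$ on a fixed neighborhood of $(x_0,\overline{x_0})$. This is exactly the type of analytic symbol estimate developed in~\cite[Chapter 1]{Sj82}: by realizing the action of $L_k$ through contour integrals based on the phase $\varphi$ rather than by differentiating an asymptotic expansion, working on a nested family of shrinking polydisks around $(x_0,\overline{x_0})$, and invoking Cauchy estimates at each step, one can absorb the factorial growth inherited from the $2k$ derivatives. Once these bounds are in hand, the realization $a_{\widetilde{V}}$ given by the truncation (\ref{eq3.14}) is a bona fide holomorphic function on $\widetilde{V}$, and a final application of analytic stationary phase with an explicit remainder bound yields $(Aa_{\widetilde{V}})(y,\widetilde{x};h) - 1 = {\cal O}(e^{-1/Ch})$ on a possibly smaller neighborhood of $(x_0,\overline{x_0})$, as claimed.
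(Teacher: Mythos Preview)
Your approach is correct and differs from the paper's in that you unpack the construction of $a$ by hand, while the paper proceeds by a direct appeal to the general inversion theory of elliptic analytic Fourier integral operators in~\cite{Sj82}. Concretely, the paper observes that Proposition~\ref{can_transf} guarantees that $A$ maps classical analytic symbols near $(x_0,\overline{x_0})$ to classical analytic symbols near the same point, then invokes~\cite[Theorem~4.5]{Sj82} to produce a two-sided microlocal inverse $B$ of the same Fourier integral operator form, and simply sets $a=B1$. Your stationary phase recursion together with the nested-domain analytic estimates is essentially what lives inside the proof of that inversion theorem, specialized to the case at hand. The paper's route is shorter and emphasizes that the result is a formal consequence of the existing theory; your route is more self-contained and makes the recursive structure of the $a_k$ explicit, which can be useful if one later wants to track constants or study degenerate situations where the black-box theorem no longer applies directly. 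One small caution: the analytic control you sketch via shrinking polydisks and Cauchy estimates is the right idea, but carrying it out rigorously requires the kind of careful bookkeeping (formal norms, or equivalently the contour-integral realization of the stationary phase operators) developed in~\cite[Chapters~1--2]{Sj82}; a naive induction on $k$ using only Cauchy's inequality tends to lose a factor that spoils the bound $C^{k+1}k^k$.
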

\begin{proof}
In view of Proposition \ref{can_transf}, the Fourier integral operator $A$ in (\ref{eq3.12}) maps classical analytic symbols defined near
$(x_0, \overline{x}_0)$ to classical analytic symbols defined in a neighborhood of the same point, see~\cite[Chapter
4]{Sj82}. Furthermore, in view of the ellipticity of $A$, from~\cite[Theorem 4.5]{Sj82}, we know that there exists a microlocal inverse $B$ of $A$ having the form
\begeq
\label{eq3.15}
(Bb)(x,\widetilde{y};h) = \frac{1}{h^n} \int\!\!\!\int_{\Gamma_1(x,\widetilde{y})} e^{-\frac{2}{h} \varphi(y,\widetilde{x}; x,\widetilde{y})} d(y,\widetilde{x},x,\widetilde{y};h) b(y,\widetilde{x};h) dy\, d\widetilde{x}.
\endeq
Here $d(y,\widetilde{x},x,\widetilde{y};h)$ is an elliptic classical analytic symbol defined in a neighborhood of the point $(x_0, \overline{x}_0; x_0, \overline{x}_0)\in \comp^{4n}$, $b(y,\widetilde{x};h)$ is a classical analytic symbol defined near $(x_0, \overline{x}_0)$, and $\Gamma_1(x,\widetilde{y})$ is a good contour for the pluriharmonic function $(y,\widetilde{x}) \mapsto -{\rm Re}\, \varphi(y,\widetilde{x}; x, \widetilde{y})$. Setting
\begeq
\label{eq3.16}
a(x,\widetilde{y};h) = (B 1)(x,\widetilde{y};h),
\endeq
we obtain the desired classical analytic symbol defined in a neighborhood of $(x_0, \overline{x}_0)$.
\end{proof}

\bigskip
\noindent
{\it Remark}. The equation $(Aa)(y,\widetilde{x};h) = 1$ can be formally written as follows:
\begeq
\label{eq3.17}
\frac{1}{h^n} \int\!\!\!\int e^{\frac{2}{h}\left(\Psi(x,\widetilde{y}) - \Psi(x,\widetilde{x}) - \Psi(y,\widetilde{y})\right)} a(x,\widetilde{y};h)\, dx d\widetilde{y} = e^{-\frac{2}{h} \Psi(y,\widetilde{x})}.
\endeq
Introducing the formal elliptic Fourier integral operators
\begeq
\label{eq3.18}
({\cal A} u)(x) = \frac{1}{h^{n/2}} \int e^{\frac{2}{h} \Psi(x,\widetilde{y})} a(x,\widetilde{y};h) u(\widetilde{y})\, d\widetilde{y},
\endeq
\begeq
\label{eq3.19}
({\cal C} u)(\widetilde{x}) = \frac{1}{h^{n/2}} \int e^{-\frac{2}{h} \Psi(y,\widetilde{x})} u(y)\, dy,
\endeq
we can rewrite (\ref{eq3.17}) in the form,
\begeq
\label{eq3.20}
\int\!\!\!\int {\cal K}_{\cal C}(\widetilde{x},x) {\cal K}_{\cal A}(x, \widetilde{y}) {\cal K}_{\cal C}(\widetilde{y},y)\, dx d\widetilde{y} = {\cal K}_{\cal C}(\widetilde{x},y).
\endeq
Here ${\cal K}_{\cal A}$, ${\cal K}_{\cal C}$ are the integral kernels of ${\cal A}$, ${\cal C}$, respectively. The equation (\ref{eq3.14.1}) is therefore formally equivalent to the operator equation
\begeq
\label{eq3.21}
{\cal C}\circ {\cal A} \circ {\cal C} = {\cal C} \Longleftrightarrow {\cal A} \circ {\cal C} = 1.
\endeq
In Sections \ref{sec_reproducing-property}, \ref{sec:end-proof} below, we shall see that the operator of the form
$$
(\widetilde{\Pi} u)(x) = \frac{1}{h^n} \int\!\!\!\int e^{\frac{2}{h}\left(\Psi(x,\widetilde{y}) - \Psi(y,\widetilde{y})\right)} a(x,\widetilde{y};h) u(y)\, dyd\widetilde{y}
$$
enjoys the (approximate) reproducing property on $H_{\Phi}$, and the equation (\ref{eq3.21}) can therefore be regarded as a formal factorization of the asymptotic Bergman projection.

\section{The reproducing property in the weak formulation}
\label{sec_reproducing-property}
\setcounter{equation}{0}
\setcounter{equation}{0}
Let us recall from Section \ref{sec_resolution-identity} that $V\Subset \Omega$ is a small open neighborhood of a point $x_0\in
\Omega$, and shrinking $V$ if necessary, we may assume that the polarization $\Psi$ of the real analytic weight function $\Phi$, introduced in
(\ref{eq3.1}), as well as the classical analytic symbol $a$, given in Proposition \ref{prop:constr-a}, are defined in a neighborhood of the
closure of the open set $V\times \rho(V)$. Here $\rho(x) = \overline{x}$ is the complex conjugation map. 

\medskip
\noindent
We introduce the following operator of Bergman type,
\begeq
\label{eq4.1}
\widetilde{\Pi}_{V} u(x) = \frac{1}{h^n} \int\!\!\!\int_{\Gamma_V} e^{\frac{2}{h}(\Psi(x,\widetilde{y}) - \Psi(y,\widetilde{y}))} a(x,\widetilde{y};h) u(y)\, dy\, d\widetilde{y}, \quad u\in H_{\Phi}(V),
\endeq
where the contour of integration $\Gamma_V \subset V \times \rho(V)$ is given by
\begeq
\label{eq4.2}
\Gamma_V = \{\tilde{y}=\overline{y},\,\,y\in V\}.
\endeq
Here in (\ref{eq4.1}) we have also chosen a realization of $a$ on $V\times \rho(V)$. It follows from (\ref{eq3.2}), combined with the Schur test, that
\begeq
\label{eq4.3}
\widetilde{\Pi}_{V} = {\cal O}(1): H_{\Phi}(V) \rightarrow H_{\Phi}(V).
\endeq

\medskip
\noindent
The purpose of this section is to show that the operator $\widetilde{\Pi}_V$ satisfies a reproducing property, in the weak formulation. Specifically, we shall prove that for a convenient class of $(u,v)\in H_{\Phi}(V)$, the continuous sesquilinear form
\begeq
\label{eq4.3.1}
H_{\Phi}(V) \times H_{\Phi}(V) \ni (u,v) \mapsto (\widetilde{\Pi}_{V} u,v)_{H_{\Phi}(V)}
\endeq
agrees, modulo an exponentially small error, with the scalar product $(u,v)_{H_{\Phi}(V)}$. This result cannot be expected to hold if $u,v$ are general elements of $H_{\Phi}(V)$, since they might both concentrate near the boundary of $V$ where we have cut off the integral operator $\widetilde{\Pi}_V$.

\medskip
\noindent
The following is the main result of this section. It will be instrumental in Section \ref{sec:end-proof}, when proving Theorem \ref{Theorem1}.

\begin{theo}
\label{reproducing_scalar_product}
There exists a small open neighborhood $W \Subset V$ of $x_0$ with $C^{\infty}$--boundary such that for each $\Phi_1 \in C(\Omega; \real)$,
$\Phi_1 \leq \Phi$, with $\Phi_1 < \Phi$ on $\Omega \backslash \overline{W}$, there exists $C > 0$ such that for all $u\in H_{\Phi}(V)$, $v\in H_{\Phi_1}(V)$, we have
\begeq
\label{eq4.3.2}
(\widetilde{\Pi}_V u,v)_{H_{\Phi}(V)} = (u,v)_{H_{\Phi}(V)} + {\cal O}(1) e^{-\frac{1}{Ch}} \norm{u}_{H_{\Phi}(V)} \norm{v}_{H_{\Phi_1}(V)}.
\endeq
\end{theo}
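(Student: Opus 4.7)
The overall strategy is to reduce the sesquilinear statement to a pointwise reproducing property for Gaussian-like wave packets, and then verify the latter by a contour deformation that reveals the structure $Aa = 1$ from Proposition~\ref{prop:constr-a}.

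First (reduction of the test function $v$), I would apply Proposition~\ref{prop:ctrl_ptwise} to $v \in H_{\Phi_1}(V)$ to get the pointwise bound $|v(x)|e^{-\Phi(x)/h} \le C h^{-n}\|v\|_{H_{\Phi_1}(V)} e^{-(\Phi-\Phi_1)(x)/h}$, which is exponentially small on $V\setminus \overline W$ by the hypothesis $\Phi_1<\Phi$ there. Consequently both $(\widetilde\Pi_V u,v)_{H_\Phi(V)}$ and $(u,v)_{H_\Phi(V)}$ agree, up to an acceptable exponentially small error, with their restrictions to $x\in W$. Next (reduction of $u$), shrinking $W$ if necessary so that Proposition~\ref{Fourier_inv} applies with $V_1=W$, I would write $u(x) = \int_V u_y(x)\,L(dy)\,d\overline y + R(x)$ on $W$, with $R$ exponentially small in the sense of (\ref{eq2.8}). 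Using the Schur-test boundedness (\ref{eq4.3}) together with the localization of $v$, the contribution of $R$ to (\ref{eq4.3.2}) is exponentially small. So the proof reduces to controlling $\int_V\bigl[(\widetilde\Pi_V u_y,v)_{H_\Phi(W)} - (u_y,v)_{H_\Phi(W)}\bigr]L(dy)\,d\overline y$, and this in turn to a pointwise statement of the form $(\widetilde\Pi_V u_y)(x) = u_y(x) + r_y(x)$ with $r_y$ admitting a Gaussian factor in $|x-y|$ times $e^{-1/Ch}$.

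Paragraph 2 (contour deformation for a single wave packet). Fix $y \in V$ and insert the explicit formula (\ref{eq2.9}) for $u_y$ into (\ref{eq4.1}). Using that $(i/h)(y'-y)\cdot\theta(y',y)$ is the second order Taylor expansion in $y'$ around $y$ of $(2/h)(\Psi(y',\overline y)-\Psi(y,\overline y))$, I would rewrite the total exponent as
\[
\tfrac{2}{h}\bigl[\Psi(x,\overline{y'}) - \Psi(y',\overline{y'}) + \Psi(y',\overline y) - \Psi(y,\overline y)\bigr] - \tfrac{2}{h}\mathcal R(y',y),
\]
with $\mathcal R$ a holomorphic $O((y'-y)^3)$ remainder. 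Up to the sign and a relabeling of slots, the bracket is exactly the phase $\varphi(\cdot,\cdot;\cdot,\cdot)$ of (\ref{eq3.3}) with the parameter slots occupied by quantities tied to $(y,\overline x)$. I would then extend the integrand holomorphically to $\comp^{2n}_{y',\widetilde y'}$, treating $\widetilde y'=\overline{y'}$ as an independent complex variable, and apply Stokes' theorem to deform $\Gamma_V$ to a good contour of the type (\ref{eq3.11}) passing through the critical point $(y' = x,\,\widetilde y'=\overline y)$. The cubic remainder $\mathcal R$ is absorbed into an elliptic holomorphic amplitude, and boundary contributions from the deformation are exponentially small thanks to the strict upper bound on $\mathrm{Re}\,\varphi$ along the deformed contour.

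Paragraph 3 (use of $Aa=1$ and conclusion). On the good contour, analytic stationary phase expresses the deformed integral as the action of the operator $A$ of (\ref{eq3.12}) on the amplitude containing $a$, up to exponentially small error. By Proposition~\ref{prop:constr-a}, $(Aa)(y,\overline x;h)=1+O(e^{-1/Ch})$, so after collecting the factors one obtains $(\widetilde\Pi_V u_y)(x) = u_y(x) + O(e^{-1/Ch})$ pointwise, modulated by the wave-packet envelope. Pairing against $v$, integrating over $y \in V$, and using the Gaussian decay from (\ref{eq2.10}) together with the $W$-localization of $v$ from Step~1 yields (\ref{eq4.3.2}).

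The main obstacle is Paragraph~2: one must carry out the contour deformation simultaneously in $y'$ and its complexified conjugate $\widetilde y'$, uniformly in the parameters $(x,y)\in W\times V$, keeping track of Stokes boundary contributions, the cubic correction $\mathcal R$, and the domain in which the realization of the analytic symbol $a$ is controlled. Verifying that the resulting analytic stationary phase expansion matches the identity $Aa=1$ in all orders of $h$ is what eliminates the polynomial corrections and leaves only the exponentially small error.
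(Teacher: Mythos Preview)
Your overall architecture---localize via the hypothesis on $\Phi_1$, decompose one of the two functions into wave packets using Proposition~\ref{Fourier_inv}, then recognize $Aa=1$ after a contour deformation---is exactly the paper's strategy. Two differences: you decompose $u$ rather than $v$, and you then aim for a \emph{pointwise} identity $\widetilde\Pi_V u_y(x)=u_y(x)+r_y(x)$. The first difference is harmless by symmetry; the second is where a genuine gap appears.

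Write out the $(y',\widetilde{y'})$--integral for $\widetilde\Pi_V u_y(x)$ and, as you do, recognize the exponent as $\tfrac{2}{h}\varphi(y',\overline y;x,\widetilde{y'})+\tfrac{2}{h}(\Psi(x,\overline y)-\Psi(y,\overline y))$ modulo your cubic $\mathcal R$. The phase is indeed $\varphi$ ``up to relabeling'', but the relabeling does \emph{not} align the integration variables with those of $A$: in~(\ref{eq3.12}) the amplitude $a$ depends only on the integrated pair, whereas here $a(x,\widetilde{y'})$ mixes the external parameter $x$ with the integrated variable $\widetilde{y'}$. Consequently the deformed $2n$--dimensional integral is \emph{not} $(Aa)$ evaluated anywhere, and Proposition~\ref{prop:constr-a} cannot be invoked to kill the full asymptotic expansion. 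Analytic stationary phase on your good contour yields a classical analytic symbol in $(x,\overline y)$ whose leading term involves $a(x,\overline y)$ and a Hessian factor; showing this symbol equals~$1$ is precisely the content of $Aa=1$, but you have not exhibited the factorization that would let you use it.

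The paper avoids this obstruction by \emph{not} passing to a pointwise statement. It decomposes $v$ into packets $v_z$ and studies the full $4n$--dimensional integral for $(\widetilde\Pi_{V_2}u,v_z)_{H_\Phi(V_1)}$, with variables $(x,\widetilde x,y,\widetilde y)$ on $\Gamma_{V_1}\times\Gamma_{V_2}$. The key observation (Proposition~\ref{prop:two_good_contours}) is that this product contour and the composed contour $\{(y,\widetilde x)\in\Gamma_V,\ (x,\widetilde y)\in\Gamma(y,\widetilde x)\}$ are both good for a single plurisubharmonic weight $G_z$. On the composed contour the inner $(x,\widetilde y)$--integral is literally $(Aa)(y,\widetilde x)$, because $a(x,\widetilde y)$ now depends only on inner variables while $u(y)\,v_z^*(\widetilde x)\,e^{-\frac{2}{h}\Psi(y,\widetilde x)}$ depends only on outer ones. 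Your approach becomes correct if you drop the pointwise reduction and instead deform the $4n$--dimensional integral for $(\widetilde\Pi_V u_y,v)_{H_\Phi}$ in the same way---this is the paper's argument with the roles of $u$ and $v$ swapped. A smaller issue: your decomposition of $u$ is only valid for the argument in $V_1$, yet $\widetilde\Pi_V u$ samples $u$ on all of $V$; you need to first replace $\widetilde\Pi_V$ by $\widetilde\Pi_{V_2}$ via the off--diagonal decay~(\ref{eq3.2}), as in~(\ref{eq4.6.1})--(\ref{eq4.6.2}).
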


\medskip
\noindent
When proving Theorem \ref{reproducing_scalar_product}, using also the notation of Section \ref{sec_resolution-identity}, we let $W \Subset V_1 \Subset V$ be an open neighborhood of $x_0$ with $C^{\infty}$--boundary, to be chosen small enough, and let $\Phi_1 \in C(\Omega; \real)$ be such that
\begeq
\label{eq4.4}
\Phi_1 \leq \Phi \,\, \wrtext{in}\,\, \Omega, \quad \Phi_1 < \Phi \,\, \wrtext{on}\,\, \Omega\backslash \overline{W}.
\endeq
We shall study the scalar product
\begeq
\label{eq4.5}
(\widetilde{\Pi}_{V} u,v)_{H_{\Phi}(V)} = \int_{V} \widetilde{\Pi}_{V} u(x) \overline{v(x)} e^{-2\Phi(x)/h}\, L(dx), \quad u\in H_{\Phi}(V),\,\, v\in H_{\Phi_1}(V),
\endeq
and let us first write, using (\ref{eq4.3}), (\ref{eq4.4}), and the Cauchy-Schwarz inequality,
\begeq
\label{eq4.6}
(\widetilde{\Pi}_{V} u,v)_{H_{\Phi}(V)} = \int_{V_1} \widetilde{\Pi}_{V} u(x) \overline{v(x)} e^{-2\Phi(x)/h}\, L(dx) +
{\cal O}(1) e^{-\frac{1}{C h}} \norm{u}_{H_{\Phi}(V)} \norm{v}_{H_{\Phi_1}(V)}.
\endeq
Here and in what follows we let $C>0$ stand for constants which may depend on $\Phi$, $\Phi_1$, but not on $u$, $v$. Let next $V_2$ be an open set such that $V_1\Subset V_2\Subset V$ and observe that in view of \eqref{eq3.2}, we have 
\begin{equation}
\label{eq4.6.1}
\norm{\widetilde{\Pi}_V(1-\chi_{V_2})u}_{L^2_{\Phi}(V_1)} \leq {\cal O}(1) e^{-\frac{1}{Ch}}\norm{u}_{H_{\Phi}(V)}.
\end{equation}
Here $\chi_{V_2}$ denotes the characteristic function of $V_2$. Using (\ref{eq4.6} and (\ref{eq4.6.1}), we may therefore write
\begeq
\label{eq4.6.2}
(\widetilde{\Pi}_{V} u,v)_{H_{\Phi}(V)} = \int_{V_1} \widetilde{\Pi}_{V_2} u(x) \overline{v(x)} e^{-2\Phi(x)/h}\, L(dx) +
{\cal O}(1) e^{-\frac{1}{C h}} \norm{u}_{H_{\Phi}(V)} \norm{v}_{H_{\Phi_1}(V)},
\endeq
where, similarly to (\ref{eq4.1}), we set
\begeq
\label{eq4.6.3}
\widetilde{\Pi}_{V_2} u(x) = \frac{1}{h^n} \int\!\!\!\int_{\Gamma_{V_2}} e^{\frac{2}{h}(\Psi(x,\widetilde{y}) - \Psi(y,\widetilde{y}))} a(x,\widetilde{y};h) u(y)\, dy\, d\widetilde{y}.
\endeq
The advantage of representing the scalar product $(\widetilde{\Pi}_{V} u,v)_{H_{\Phi}(V)}$ in the form (\ref{eq4.6.2}) is due to the fact that in the right hand side of (\ref{eq4.6.2}), both the integrations in $x$ and $y$ are confined to suitable relatively compact subsets of the open set $V$, where good pointwise estimates on the holomorphic functions $u$ and $v$ are available, in view of Proposition \ref{prop:ctrl_ptwise}.

\medskip
\noindent
We would next like to apply the resolution of the identity (\ref{eq2.8}) to the holomorphic function $v\in H_{\Phi_1}(V)$ in the integral in the right hand side of (\ref{eq4.6.2}). To this end, let us first observe that thanks to the exponential decay of $v$ in $H_{\Phi}(V)$ away from $\overline{W}$, committing an exponentially small error, we may restrict the domain of integration in the right hand side of (\ref{eq2.8}) to an arbitrarily small but fixed neighborhood $W_1$ of $\overline{W}$, $W_1 \Subset V_1$. In precise terms, we may write
\begeq
\label{eq4.7}
v(x) = \int_{W_1} v_z(x)\, dz\,d\overline{z} + {\cal O}(1) \norm{v}_{H_{\Phi_1}(V)} e^{\frac{1}{h}(\Phi(x) - \frac{1}{C})}, \quad x\in V_1,
\endeq
where, similarly to (\ref{eq2.9}), we have
\begeq
\label{eq4.8}
v_z(x) = \frac{1}{(2 \pi h)^n} e^{\frac{i}{h}(x-z)\cdot \theta(x,z)} v(z)\chi(z) {\rm det}\,(\partial_{\overline{z}}\theta(x,z)) \in {\rm Hol}(V)
\endeq
is well localized at the point $z \in W_1$, see (\ref{eq2.6}).  Combining (\ref{eq4.6.2}), (\ref{eq4.7}), and (\ref{eq4.3}), we get
\begin{multline}
\label{eq4.9}
(\widetilde{\Pi}_{V} u,v)_{H_{\Phi}(V)} \\ = \int_{W_1}\!\int_{V_1} \widetilde{\Pi}_{V_2} u(x) \overline{v_z(x)} e^{-2\Phi(x)/h}\, L(dx)\, dz\, d\overline{z} + {\cal O}(1) e^{-\frac{1}{C h}}
\norm{u}_{H_{\Phi}(V)} \norm{v}_{H_{\Phi_1}(V)}.
\end{multline}
Let us rewrite (\ref{eq4.9}) as follows,
\begeq
\label{eq4.11}
(\widetilde{\Pi}_{V} u,v)_{H_{\Phi}(V)} = \int_{W_1} (\widetilde{\Pi}_{V_2} u,v_z)_{H_{\Phi}(V_1)} dz\, d\overline{z} + {\cal O}(1) e^{-\frac{1}{C h}} \norm{u}_{H_{\Phi}(V)} \norm{v}_{H_{\Phi_1}(V)}.
\endeq
When proving Theorem \ref{reproducing_scalar_product}, it will be convenient to work with the decomposition (\ref{eq4.11}), in view of the good localization properties of the holomorphic functions $v_z$, for $z\in W_1$.

\bigskip
\noindent
The crucial role in the proof is played by the following observation. 

\begin{prop}
\label{prop:two_good_contours}
Given $z\in V$, let us set for some $\delta >0$ small, 
\begeq
\label{eq4.11.1}
F_{z}(\widetilde{x}) = \Phi(\overline{\widetilde{x}}) - \delta \abs{\widetilde{x} - \overline{z}}^2,\quad \widetilde{x} \in \rho(V). 
\endeq
Let $G_z$ be the following real analytic plurisubharmonic function: 
\begeq
\label{eq4.12}
G_z(x,\widetilde{x},y,\widetilde{y}) = 2{\rm Re}\, \Psi(x,\widetilde{y}) - 2{\rm Re}\, \Psi(y,\widetilde{y}) + \Phi(y) + F_{z}(\widetilde{x}) - 2{\rm Re}\, \Psi(x,\widetilde{x}).
\endeq
Then $G_{z}$ has a non-degenerate critical point at $(z,\overline{z},z,\overline{z})$ of signature $(4n,4n)$, with the critical value equal to $0$. Furthermore, the following two submanifolds of $V\times \rho(V)\times V \times
\rho(V) \subset \comp^{4n}$ are good contours for $G_{z}$ in a neighbourhood of $(z,\overline{z},z,\overline{z})$, in the sense that they are both contours of maximal real dimension $4n$ passing through the critical point, along which the Hessian of $G_z$ is negative definite:
\begin{enumerate}
\item The contour 
\begeq
\label{eq4.12.1}
\Gamma_{V}\times \Gamma_{V}=\{(x,\widetilde{x},y,\widetilde{y});\,\widetilde{x} = \overline{x},\,\,\widetilde{y} = \overline{y},\,\, x\in V,\,\,y\in V\}
\endeq
\item The composed contour
\begeq
\label{eq4.12.2}  
\{(x,\widetilde{x},y,\widetilde{y});\, (y,\widetilde{x}) \in \Gamma_{V},\, (x,\widetilde{y}) \in \Gamma(y, \widetilde{x})\}. 
\endeq
Here $\Gamma(y,\widetilde{x}) \subset \comp^{2n}_{x,\widetilde{y}}$ is a good contour for the pluriharmonic function $(x,\widetilde{y}) \mapsto {\rm Re}\,\varphi(y,\widetilde{x};x,\widetilde{y})$ described in {\rm (\ref{eq3.11})}, {\rm (\ref{eq3.12})}.
\end{enumerate} 
\end{prop}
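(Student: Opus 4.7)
The plan is to verify the assertions by direct substitution at the proposed critical point $(z,\overline{z},z,\overline{z})$ and then, for each of the two contours, to rewrite $G_z$ on the contour as a manifestly non-positive expression controlled by a negative definite quadratic form.

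First I would check that the critical value vanishes. Evaluating each of the five terms in \eqref{eq4.12} at $(z,\overline{z},z,\overline{z})$ gives $2\Phi(z)$, $-2\Phi(z)$, $\Phi(z)$, $\Phi(z)$ and $-2\Phi(z)$, respectively (using $\Psi(w,\overline{w})=\Phi(w)$ and $F_z(\overline{z})=\Phi(z)$), so the critical value is $0$. The fact that this point is a critical point will be a consequence of the contour computations below, since we will see that $G_z$ restricted to each contour attains its maximum at this point and the contour passes through it transversally to the ambient complex structure; alternatively a direct Wirtinger computation gives $\partial_x G_z=\partial_x\Psi(x,\widetilde{y})-\partial_x\Psi(x,\widetilde{x})$, $\partial_{\widetilde{y}}G_z=\partial_{\widetilde{y}}\Psi(x,\widetilde{y})-\partial_{\widetilde{y}}\Psi(y,\widetilde{y})$, etc., each of which vanishes at $(z,\overline{z},z,\overline{z})$ by the non-degeneracy of $\Psi''_{xy}$.

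Next, the main computation. On the anti-diagonal contour \eqref{eq4.12.1}, substituting $\widetilde{x}=\overline{x}$, $\widetilde{y}=\overline{y}$ and using $F_z(\overline{x})=\Phi(x)-\delta|x-z|^2$ together with $\Psi(w,\overline{w})=\Phi(w)$, the five terms collapse to
\begeq
\label{eqplan1}
G_z\bigl|_{\Gamma_V\times\Gamma_V} = -\bigl(\Phi(x)+\Phi(y)-2\Re\Psi(x,\overline{y})\bigr) - \delta|x-z|^2,
\endeq
which by \eqref{eq3.2} is bounded above by $-c|x-y|^2-\delta|x-z|^2$, a negative definite quadratic form in the $4n$ real parameters $(x,y)$ of the contour. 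On the composed contour \eqref{eq4.12.2}, I first set $\widetilde{x}=\overline{y}$. Using the definition \eqref{eq3.3} of $\varphi$ and the identity $\Psi(y,\overline{y})=\Phi(y)$, a direct rearrangement gives
\begeq
\label{eqplan2}
G_z\bigl|_{\widetilde{x}=\overline{y}} = 2\Re\,\varphi(y,\overline{y};x,\widetilde{y}) - \delta|y-z|^2.
\endeq
Since $(x,\widetilde{y})$ then runs over $\Gamma(y,\overline{y})$, the estimate \eqref{eq3.11} yields $G_z \leq -\frac{2}{C}(|x-y|^2+|\widetilde{y}-\overline{y}|^2) - \delta|y-z|^2$, again a negative definite quadratic form in the $4n$ real parameters of the composed contour.

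Finally, the signature assertion follows from the standard observation (see \cite[Chapter 3]{Sj82}) that if a real analytic plurisubharmonic (in fact real analytic) function on a neighbourhood in $\comp^N$ admits a maximally totally real $N$-dimensional contour through a critical point along which its Hessian is negative definite, then the full real Hessian is non-degenerate of signature $(N,N)$; here $N=4n$. The two contours above are both of real dimension $4n$ and, as the quadratic forms in \eqref{eqplan1} and \eqref{eqplan2} are definite, they are maximally totally real through the critical point. I do not foresee a serious obstacle: the only slightly delicate point is the algebraic reorganisation that leads to \eqref{eqplan2}, but this is a direct consequence of the definitions of $\varphi$ and of the polarization $\Psi$, together with the reduction $\Psi(y,\overline{y})=\Phi(y)$ used to absorb the term $\Phi(y)$ in $G_z$.
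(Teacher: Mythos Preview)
Your proposal is correct and follows essentially the same route as the paper's own proof: the identity \eqref{eqplan1} is exactly the paper's estimate on $\Gamma_V\times\Gamma_V$, and your reorganisation \eqref{eqplan2} is precisely the paper's rewriting $G_z = 2\Re\,\varphi(y,\widetilde{x};x,\widetilde{y}) - 2\Re\,\Psi(y,\widetilde{x}) + \Phi(y) + F_z(\widetilde{x})$ specialised to $\widetilde{x}=\overline{y}$, after which both arguments invoke \eqref{eq3.11}. The only cosmetic difference is that the paper further rewrites the resulting bound as $-\frac{1}{C}\bigl(|x-z|^2+|y-z|^2+|\widetilde{x}-\overline{z}|^2+|\widetilde{y}-\overline{z}|^2\bigr)$, but your form is already negative definite in the $4n$ real parameters of the composed contour, so nothing is missing.
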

\begin{proof}
Let us observe first that the two contours clearly pass through the point $(z,\overline{z},z,\overline{z})$ and that $G_{z}(z,\overline{z},z,\overline{z})=0$, in view of (\ref{eq4.11.1}), (\ref{eq3.1}). In order to show that $(z,\overline{z},z,\overline{z})$ is a  non-degenerate critical point of signature $(4n,4n)$, it suffices, in view of the plurisubharmonicity of $G_z(x,\widetilde{x},y,\widetilde{y})$, to observe that, using (\ref{eq3.2}), (\ref{eq4.11.1}), we have
\begeq
\label{eq4.13}
G_{z}(x,\overline{x},y,\overline{y}) \leq -\frac{1}{C}\abs{y-x}^2 - \delta\abs{x-z}^2 \leq -\frac{1}{C}\abs{x-z}^2 - \frac{1}{C}\abs{y-z}^2.
\endeq
This establishes at the same time that the contour (\ref{eq4.12.1}) is a good contour for $G_z$. It only remains to prove that the second submanifold given in (\ref{eq4.12.2}) also defines a good contour. To this end, let us write, using (\ref{eq3.3}), (\ref{eq4.12}),
\begeq
\label{eq4.14}
G_{z}(x,\widetilde{x},y,\widetilde{y}) = 2{\rm Re}\, \varphi(y,\widetilde{x}; x,\widetilde{y}) - 2{\rm Re}\, \Psi(y,\widetilde{x}) + \Phi(y) + F_{z}(\widetilde{x}).
\endeq
Using (\ref{eq3.11}), (\ref{eq4.11.1}), (\ref{eq3.1}), we get therefore for $(y,\widetilde{x})\in \Gamma_{V}$, $(x,\widetilde{y}) \in \Gamma(y,\widetilde{x})$,
\begin{multline}
\label{eq4.15}
G_{z}(x,\widetilde{x},y,\widetilde{y}) \\ \leq -\frac{1}{C}\abs{y-x}^2 -\frac{1}{C}\abs{\widetilde{y}-\widetilde{x}}^2 - 2{\rm Re}\, \Psi(y,\widetilde{x}) + \Phi(y) + {\Phi}(\overline{\widetilde{x}}) - \delta\abs{\widetilde{x} - \overline{z}}^2 \\
= -\frac{1}{C}\abs{y-x}^2 - \frac{1}{C}\abs{\widetilde{y}-\widetilde{x}}^2 - \delta\abs{y - z}^2.
\end{multline}
It follows that
\begin{multline}
\label{eq4.16}
G_{z}(x,\widetilde{x},y,\widetilde{y}) \leq  -\frac{1}{C}\abs{x-z}^2 - \frac{1}{C}\abs{y-z}^2 - \frac{1}{C}\abs{\widetilde{x}-\overline{z}}^2 - \frac{1}{C}\abs{\widetilde{y}-\widetilde{x}}^2 \\
\leq -\frac{1}{C}\abs{x-z}^2 - \frac{1}{C}\abs{y-z}^2 - \frac{1}{C}\abs{\widetilde{x}-\overline{z}}^2 - \frac{1}{C}\abs{\widetilde{y}-\overline{z}}^2,
\end{multline}
which demonstrates that the composed contour (\ref{eq4.12.2}) is also good and concludes the proof. 
\end{proof}

\bigskip
\noindent
We are now ready to take a closer look at the scalar product $(\widetilde{\Pi}_{V_2}u,v_z)_{H_{\Phi}(V_1)}$, occuring in the right hand side of (\ref{eq4.11}). 
\begin{prop}\label{prop:repr-F_z}
There exists an open neighborhood $W_1\Subset V_1$ of $x_0$ such that, uniformly in $z\in W_1$, we have 
\begeq
\label{eq4.17}
(\widetilde{\Pi}_{V_2} u, v_z)_{H_{\Phi}(V_1)} = (u,v_z)_{H_{\Phi}(V_1)} + {\cal O}(1)e^{-\frac{1}{C h}} \norm{u}_{H_{\Phi}(V)} \abs{v(z)} e^{-\Phi(z)/h}.
\endeq
Here $v_z$ is given in {\rm {(\ref{eq4.8})}}. 
\end{prop}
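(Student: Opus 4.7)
The plan is to express $(\widetilde{\Pi}_{V_2}u, v_z)_{H_{\Phi}(V_1)}$ as a $4n$-dimensional oscillatory integral on the product contour $\Gamma_{V_1}\times \Gamma_{V_2}$, then use Stokes' theorem to deform it to the composed contour of Proposition~\ref{prop:two_good_contours}, on which the inner integration reduces, via the defining identity $(Aa)=1+{\cal O}(e^{-1/Ch})$ of Proposition~\ref{prop:constr-a}, to leave behind the scalar product $(u,v_z)_{H_{\Phi}(V_1)}$ modulo the claimed error.

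First, I would unpack the definitions (\ref{eq4.6.3}) of $\widetilde{\Pi}_{V_2}u$ and (\ref{eq4.8}) of $v_z$. Since $\Phi$ is real analytic, on $\Gamma_{V_1}$ (where $\widetilde{x}=\overline{x}$) the antiholomorphic factor $\overline{v_z(x)}e^{-2\Phi(x)/h}$ extends as $e^{-\frac{2}{h}\Psi(x,\widetilde{x})}\bar{v}_z(\widetilde{x})$, with $\bar{v}_z(\widetilde{x})$ the holomorphic continuation in $\widetilde{x}$ of $\overline{v_z(\overline{\widetilde{x}})}$. Rearranging the phase via (\ref{eq3.3}),
$$
\Psi(x,\widetilde{y}) - \Psi(y,\widetilde{y}) - \Psi(x,\widetilde{x}) = \varphi(y,\widetilde{x};x,\widetilde{y}) - \Psi(y,\widetilde{x}),
$$
the scalar product becomes, up to a constant Jacobian from $L(dx)$,
$$
\frac{1}{h^{n}} \int\!\!\!\int_{\Gamma_{V_1}}\!\!\int\!\!\!\int_{\Gamma_{V_2}} e^{\frac{2}{h}[\varphi(y,\widetilde{x};x,\widetilde{y}) - \Psi(y,\widetilde{x})]}\, a(x,\widetilde{y};h)\, u(y)\, \bar{v}_z(\widetilde{x})\,dy\,d\widetilde{y}\,dx\,d\widetilde{x}.
$$
By Proposition~\ref{prop:ctrl_ptwise} applied to $u$ and the localization estimate (\ref{eq2.10}) applied to $v_z$, the modulus of this integrand is controlled by $Ch^{-N}\norm{u}_{H_{\Phi}(V)}\abs{v(z)}e^{-\Phi(z)/h}\,e^{G_z/h}$, where $G_z$ is the weight from (\ref{eq4.12}).

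Next, I would apply Stokes' theorem to deform the product contour $\Gamma_{V_1}\times\Gamma_{V_2}$ to the composed contour (\ref{eq4.12.2}). Both contours pass through the critical point $(z,\overline{z},z,\overline{z})$ and are good for $G_z$ by Proposition~\ref{prop:two_good_contours}, so the integrand is exponentially localized near this critical point along either one. A standard homotopy between two good contours for a plurisubharmonic weight, as developed in \cite[Chapter 3]{Sj82}, then shows that the deformation introduces only an error of order ${\cal O}(e^{-1/Ch})\norm{u}_{H_{\Phi}(V)}\abs{v(z)}e^{-\Phi(z)/h}$. The neighborhood $W_1$ of $x_0$ must be taken small enough so that the composed contour remains within the domains of holomorphy of $a$ and $\Psi$ for all $(y,\widetilde{x})\in \Gamma_V$ close to $(z,\overline{z})$ with $z\in W_1$.

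On the composed contour, the inner integration over $(x,\widetilde{y})\in \Gamma(y,\widetilde{x})$ is exactly $(Aa)(y,\widetilde{x};h)=1+{\cal O}(e^{-1/Ch})$ by Proposition~\ref{prop:constr-a}. What remains is the outer integral over $\Gamma_V$ of $e^{-\frac{2}{h}\Psi(y,\widetilde{x})}u(y)\bar{v}_z(\widetilde{x})$ concentrated near $(z,\overline{z})$, and parameterizing $\Gamma_V$ by $\widetilde{x}=\overline{y}$ (so that $\Psi(y,\overline{y})=\Phi(y)$ and $\bar{v}_z(\overline{y})=\overline{v_z(y)}$) turns it into $\int u(y)\overline{v_z(y)}e^{-2\Phi(y)/h}\,L(dy)$ over a small neighborhood of $z$. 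Extending the integration to all of $V_1$ contributes one more exponentially small error by the localization of $v_z$, producing $(u,v_z)_{H_{\Phi}(V_1)}$ up to the claimed remainder. The main obstacle is the contour deformation in the second step: although both contours share the same non-degenerate critical point and are good for $G_z$, justifying the deformation rigorously requires constructing an interpolating chain of good contours along which $G_z$ remains negative away from the critical point, which is precisely where the framework of \cite[Chapter 3]{Sj82} is essential.
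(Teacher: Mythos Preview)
Your proposal is correct and follows essentially the same route as the paper: write $(\widetilde{\Pi}_{V_2}u,v_z)_{H_{\Phi}(V_1)}$ as a holomorphic $4n$--fold integral over $\Gamma_{V_1}\times\Gamma_{V_2}$, bound the integrand by $e^{G_z/h}$ via Proposition~\ref{prop:ctrl_ptwise} and the localization of $v_z$, deform to the composed contour (\ref{eq4.12.2}) using the homotopy of good contours from \cite[Chapter 3]{Sj82}, and then recognize the inner integral as $(Aa)(y,\widetilde{x};h)=1+{\cal O}(e^{-1/Ch})$ so that the outer integral collapses to $(u,v_z)_{H_{\Phi}(V_1)}$. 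The paper carries out precisely these steps, with the same references and the same choice of $W_1$ small enough that the deformed contour stays in the domain where everything is defined.
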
 
\begin{proof}
The scalar product in the Hilbert space of holomorphic functions $H_{\Phi}(V_1)$ can be expressed as follows,
\begeq
\label{eq4.18}
(f,g)_{H_{\Phi}(V_1)} = \int_{V_1} f(x) \overline{g(x)} e^{-\frac{2\Phi(x)}{h}}\, L(dx) = C_n  \int\!\!\!\int_{\Gamma_{V_1}} f(x) g^*(\widetilde{x}) e^{-\frac{2}{h}\Psi(x,\widetilde{x})}\, dx\, d\widetilde{x}.
\endeq
Here the contour $\Gamma_{V_1}$ is defined similarly to (\ref{eq4.2}) and $C_n$ is a numerical factor, depending on $n$ only, such that the Lebesgue measure $L(dx)$ on $\comp^n$ satisfies $L(dx) = C_n dx\, d\overline{x}$. In (\ref{eq4.18}) we have also set
\begeq
\label{eq4.19}
g^*(\widetilde{x}) = \overline{g(\overline{\widetilde{x}})} \in H_{\widehat{\Phi}}(\rho(V_1)), \quad \widehat{\Phi}(\widetilde{x}) = \Phi(\overline{\widetilde{x}}).
\endeq
Recalling (\ref{eq4.6.3}) and using (\ref{eq4.18}), we see that the scalar product $(\widetilde{\Pi}_{V_2} u, v_z)_{H_{\Phi}(V_1)}$ takes the form
\begeq
\label{eq4.20}
\frac{C_n}{h^n} \int\!\!\!\int_{\Gamma_{V_1}} \left(\int\!\!\!\int_{\Gamma_{V_2}} e^{\frac{2}{h}(\Psi(x,\widetilde{y}) - \Psi(y,\widetilde{y}))} a(x,\widetilde{y};h) u(y)\, dy\, d\widetilde{y}\right) v_z^*(\widetilde{x}) e^{-\frac{2}{h}\Psi(x,\widetilde{x})}\, dx\,d\widetilde{x}.
\endeq
Here using (\ref{eq4.8}), (\ref{eq2.6}), we observe that
\begeq
\label{eq4.21}
\abs{v_z^*(\widetilde{x})} \leq \frac{{\cal O}(1)}{h^n} \abs{v(z)} e^{-\Phi(z)/h} e^{F_z(\widetilde{x})/h}, \quad \widetilde{x} \in \rho(V_1),
\endeq
where $F_z$ is the strictly plurisubharmonic function in $\rho(V_1)$ given by
\begeq
\label{eq4.22}
F_z(\widetilde{x}) = \widehat{\Phi}(\widetilde{x}) - \delta\abs{\widetilde{x} - \overline{z}}^2,
\endeq
see also (\ref{eq4.11.1}). Combining (\ref{eq4.21}) with Proposition \ref{prop:ctrl_ptwise} we conclude that the absolute value of the holomorphic integrand in (\ref{eq4.20})
\begeq
\label{eq4.23}
V_1 \times \rho(V_1) \times V_2 \times \rho(V_2) \ni (x,\widetilde{x}, y,\widetilde{y}) \mapsto e^{\frac{2}{h}(\Psi(x,\widetilde{y}) - \Psi(y,\widetilde{y}))}  a(x,\widetilde{y};h) u(y) v_z^*(\widetilde{x}) e^{-\frac{2}{h}\Psi(x,\widetilde{x})}
\endeq
does not exceed 
\begeq
\label{eq4.24}
\frac{{\cal O}(1)}{h^{2n}} \norm{u}_{H_{\Phi}(V)} \abs{v(z)} e^{-\Phi(z)/h} e^{G_z(x,\widetilde{x}, y,\widetilde{y})/h}. 
\endeq
Here the plurisubharmonic function $G_z(x,\widetilde{x}, y,\widetilde{y})$ has been defined in (\ref{eq4.12}), and the contour of integration 
$\Gamma_{V_1} \times \Gamma_{V_2}$ in (\ref{eq4.20}) is therefore good for $G_z$, in view of Proposition \ref{prop:two_good_contours}. In particular, only a small neighborhood of the critical point $(z,\overline{z},z,\overline{z})$ gives a contribution that is not exponentially small to the integral (\ref{eq4.20}). In view of (\ref{eq4.11.1}), (\ref{eq4.12}), let us also remark that $G_z = G_{x_0} + {\cal O}(\delta\abs{x_0 -z})$.

\bigskip
\noindent
We shall now carry out a contour deformation in (\ref{eq4.20}), making use of Proposition \ref{prop:two_good_contours}. When doing so, let us recall from~\cite[Chapter 3]{Sj82},~\cite[Proposition 3.5]{Delort} that all good contours are homotopic, with the homotopy through good contours. As explained in~\cite[Chapter 3]{Sj82}, a homotopy between two good contours is obtained by working in the Morse coordinates in a neighborhood of the critical point. An application of the Stokes formula and Proposition \ref{prop:two_good_contours} allow us therefore to conclude that there exists a small open neighborhood $W_1 \Subset V_1$ of $x_0$ such that for all $z\in W_1$, the integral (\ref{eq4.20}) is equal to the integral
\begeq
\label{eq4.25}
C_n \int\!\!\!\int_{\Gamma_{V_1}} \left(\frac{1}{h^n} \int\!\!\!\int_{\Gamma(y,\widetilde{x})\cap (V_1\times \rho(V_1))} e^{\frac{2}{h} \varphi(y,\widetilde{x}; x,\widetilde{y})} a(x,\widetilde{y};h) dx\, d\widetilde{y}\right) u(y) v_{z}^*(\widetilde{x}) e^{-\frac{2}{h} \Psi(y,\widetilde{x})} dy\, d\widetilde{x},
\endeq
modulo an error term of the form
\begeq
\label{eq4.26}
{\cal O}(1) \norm{u}_{H_{\Phi}(V)} \abs{v(z)} e^{-\Phi(z)/h} e^{-\frac{1}{C h}}.
\endeq
Here we have also used (\ref{eq4.24}). An application of Proposition \ref{prop:constr-a} shows that the integral (\ref{eq4.25}) is equal to
\begeq
\label{eq4.27}
(u,v_z)_{H_{\Phi}(V_1)} + {\cal O}(1)e^{-\frac{1}{C h}} \norm{u}_{H_{\Phi}(V)} \abs{v(z)} e^{-\Phi(z)/h},
\endeq
which completes the proof. 
\end{proof}

\bigskip
\noindent
{\it Remark}. The advantage of exploiting the resolution of the identity given in Proposition \ref{Fourier_inv} is due precisely to the fact that it is thanks to it that we are able to reduce the study of the scalar product $(\widetilde{\Pi}_{V} u,v)_{H_{\Phi}(V)}$ to a superposition of integrals over good contours --- see (\ref{eq4.11}), (\ref{eq4.20}).

\bigskip
\noindent
It is now easy to finish the proof of Theorem \ref{reproducing_scalar_product}. To this end, we let $W\Subset W_1$, where $W_1$ is as in Proposition \ref{prop:repr-F_z}. Combining (\ref{eq4.11}) with (\ref{eq4.17}) we get 
\begeq
\label{eq4.28}
(\widetilde{\Pi}_{V} u, v)_{H_{\Phi}(V)} = \int_{W_1} (u,v_z)_{H_{\Phi}(V_1)}\, dz\,d\overline{z} + {\cal O}(1)e^{-\frac{1}{C h}} \norm{u}_{H_{\Phi}(V)} \norm{v}_{H_{\Phi_1}(V)}.
\endeq
On the other hand, using (\ref{eq4.7}), we can write
\begeq
\label{eq4.29}
(u, v)_{H_{\Phi}(V)} = \int_{W_1} (u,v_z)_{H_{\Phi}(V_1)}\, dz\,d\overline{z} + {\cal O}(1)e^{-\frac{1}{C h}} \norm{u}_{H_{\Phi}(V)} \norm{v}_{H_{\Phi_1}(V)}.
\endeq
The proof of Theorem \ref{reproducing_scalar_product} is complete.

\section{End of the proof of Theorem \ref{Theorem1}}
\label{sec:end-proof}
\setcounter{equation}{0}
To conclude the proof of Theorem \ref{Theorem1}, we shall first pass from the scalar products in Theorem \ref{reproducing_scalar_product} to weighted $L^2$ norm estimates . To this end, let $\Phi_1 \in C(\Omega;\real)$ be such that
\begeq
\label{eq5.1}
\Phi_1 \leq \Phi \,\, \wrtext{in}\,\, \Omega, \quad \Phi_1 < \Phi \,\, \wrtext{on}\,\, \Omega\setminus \overline{W}.
\endeq
Let us notice that while the weighted space $H_{\Phi_1}(V)$ is not preserved by the action of the operator $\widetilde{\Pi}_V$ in (\ref{eq4.1}), we still have
\begeq
\label{eq5.2}
\widetilde{\Pi}_V = {\cal O}(1): H_{\Phi_1}(V) \rightarrow H_{\Phi_2}(V),
\endeq
where similarly to (\ref{eq5.1}), the weight function $\Phi_2 \in C(\Omega;\real)$ satisfies
\begeq
\label{eq5.3}
\Phi_2 \leq \Phi \,\, \wrtext{in}\,\, \Omega, \quad \Phi_2 < \Phi \,\, \wrtext{on}\,\, \Omega\setminus \overline{W}.
\endeq
Indeed, let us write $\Phi_1 = \Phi - \psi_1$, $\psi_1 \geq 0$, with strict inequality on $\Omega\setminus\overline{W}$. Using (\ref{eq3.2}) together with the Schur test, we obtain (\ref{eq5.2}) with $\Phi_2 = \Phi - \psi_2$, where $0\leq \psi_2 \in C(\Omega;\real)$ is the infimal convolution
\begeq
\label{eq5.4}
\psi_2(x) = \inf_{y\in V} \left(\frac{\abs{x-y}^2}{2C} + \psi_1(y)\right).
\endeq
Here $C>0$ is sufficiently large. It is therefore clear that (\ref{eq5.3}) holds.

\medskip
\noindent
Let $u\in H_{\Phi_1}(V)$, where $\Phi_1 \in C(\Omega;\real)$ satisfies (\ref{eq5.1}), and let us apply Theorem \ref{reproducing_scalar_product}, with $v = (\widetilde{\Pi}_V -1)u \in H_{{\rm max}(\Phi_1,\Phi_2)}(V)$, and ${\rm max}(\Phi_1,\Phi_2)$ in place of $\Phi_1$. We obtain, using also (\ref{eq5.2}),
\begeq
\label{eq5.5}
\norm{(\widetilde{\Pi}_V -1)u}_{H_{\Phi}(V)} \leq {\cal O}(1) e^{-\frac{1}{Ch}} \norm{u}_{H_{\Phi_1}(V)}.
\endeq
The estimate (\ref{eq5.5}) is very close to the approximate reproducing property for $\widetilde{\Pi}_V$ that we seek but we still need to free ourselves from the auxiliary weight $\Phi_1$. This will be accomplished by $\overline{\partial}$--surgery. Without loss of generality, in what follows we shall assume therefore that the bounded open set $V$ is pseudoconvex, and we may even choose it to be a ball centered at $x_0$.

\bigskip
\noindent
Let $U \Subset W \Subset V$ be an open neighborhood of $x_0$ with $C^{\infty}$--boundary. Given $u\in H_{\Phi}(V)$, we shall estimate
\begeq
\label{eq5.6}
\norm{(\widetilde{\Pi}_V -1)u}_{H_{\Phi}(U)}.
\endeq
When doing so, let $\Phi_1 \in C^{\infty}(\Omega;\real)$ be such that
\begeq
\label{eq5.7}
\Phi_1 = \Phi \,\, \wrtext{in}\,\, \overline{W}, \quad \Phi_1 < \Phi \,\, \wrtext{on}\,\, \Omega\setminus \overline{W},
\endeq
with $\norm{\Phi - \Phi_1}_{C^2(\overline{V})}$ small enough. In particular, $\Phi_1$ is strictly plurisubharmonic in $V$, see also (\ref{eq2.1}), so that
\begeq
\label{eq5.8}
\sum_{j,k=1}^n \frac{\partial^2 \Phi_1}{\partial x_j \partial \overline{x}_k}(x) \xi_j \overline{\xi}_k \geq \frac{\abs{\xi}^2}{{\cal O}(1)}, \quad x\in V, \quad \xi \in \comp^n.
\endeq
Let $\chi \in C^{\infty}_0(W; [0,1])$ be such that $\chi = 1$ in a neighborhood of $\overline{U}$. We shall also need an auxiliary weight $\Phi_3\in C^{\infty}(V;\real)$ such that
\begeq
\label{eq5.9}
\Phi_3(x) \leq \Phi_1(x) \leq \Phi(x),\quad x\in V,
\endeq
which furthermore satisfies
\begeq
\label{eq5.10}
\Phi_3 = \Phi\,\, \wrtext{near}\,\,{\rm supp}\,(\nabla \chi),
\endeq
\begeq
\label{eq5.11}
\Phi_3 < \Phi\,\, \wrtext{near}\,\, \overline{U}.
\endeq
We may also arrange so that $\Phi_3$ is strictly plurisubharmonic in $V$,
\begeq
\label{eq5.12}
\sum_{j,k=1}^n \frac{\partial^2 \Phi_3}{\partial x_j \partial \overline{x}_k}(x) \xi_j \overline{\xi}_k \geq \frac{\abs{\xi}^2}{{\cal O}(1)}, \quad x\in V, \quad \xi \in \comp^n.
\endeq

\medskip
\noindent
When estimating (\ref{eq5.6}), we write
$$
u = \chi u + (1-\chi)u, \quad u\in H_{\Phi}(V).
$$
Here
$$
\overline{\partial}(\chi u) = u\overline{\partial}\chi
$$
satisfies
\begeq
\label{eq5.13}
\norm{\overline{\partial}(\chi u)}_{L^2_{\Phi_3}(V)} \leq {\cal O}(1) \norm{u}_{H_{\Phi}(V)},
\endeq
in view of (\ref{eq5.10}). By an application of H\"ormander's $L^2$-estimate for the $\overline{\partial}$--equation in the pseudoconvex open set $V$ for the weight $\Phi_3$ (\cite[Proposition 4.2.5]{Horm_Conv}), there exists $w\in L^2_{{\Phi_3}}(V)$ such that
\begeq
\label{eq5.14}
\overline{\partial} w = \overline{\partial}(\chi u),
\endeq
with
\begeq
\label{eq5.15}
\norm{w}_{L^2_{\Phi_3}(V)} \leq {\cal O}(h^{1/2}) \norm{\overline{\partial}(\chi u)}_{L^2_{\Phi_3}(V)} \leq {\cal O}(h^{1/2}) \norm{u}_{H_{\Phi}(V)}.
\endeq
Here we have also used (\ref{eq5.13}. Using (\ref{eq5.7}), (\ref{eq5.9}), and (\ref{eq5.15}), we see that the function $\chi u -w\in {\rm Hol}(V)$ satisfies
\begeq
\label{eq5.16}
\norm{\chi u -w}_{H_{\Phi_1}(V)} \leq \|\chi u\|_{L^2_{\Phi_1}(V)}+\|w\|_{L^2_{\Phi_1}(V)}={\cal O}(1) \norm{u}_{H_{\Phi}(V)},
\endeq
and therefore by (\ref{eq5.5}) we conclude that
\begeq
\label{eq5.17}
\norm{(\widetilde{\Pi}_V -1)(\chi u - w)}_{H_{\Phi}(V)}\leq {\cal O}(1) e^{-\frac{1}{Ch}} \norm{u}_{H_{\Phi}(V)}.
\endeq
Next, similarly to (\ref{eq4.6.1}), using (\ref{eq3.2}), we obtain that
\begeq
\label{eq5.18}
\norm{(\widetilde{\Pi}_V -1)(1-\chi)u}_{L^2_{\Phi}(U)} \leq {\cal O}(1) e^{-\frac{1}{Ch}}\norm{u}_{H_{\Phi}(V)}.
\endeq
We finally come to estimate the norm $\norm{(\widetilde{\Pi}_V - 1)w}_{L^2_{\Phi}(U)}$, and we remark first that in view of (\ref{eq5.11}), (\ref{eq5.15}), we have
\begeq
\label{eq5.19}
\norm{w}_{L^2_{\Phi}(U)} \leq {\cal O}(1) e^{-\frac{1}{Ch}}\norm{u}_{H_{\Phi}(V)}.
\endeq
Next, let $U \Subset U_1 \Subset W$ be such that we still have
\begeq
\label{eq5.20}
\Phi_3 < \Phi \,\, \wrtext{on}\,\, \overline{U_1},
\endeq
and let $\chi_{U_1}$ stand for the characteristic function of $U_1$. Using (\ref{eq3.2}), (\ref{eq5.9}), and (\ref{eq5.15}), we get
\begin{multline}
\label{eq5.21}
\norm{\widetilde{\Pi}_V w}_{L^2_{\Phi}(U)} \leq \norm{\widetilde{\Pi}_V (1-\chi_{U_1}) w}_{L^2_{\Phi}(U)}
+ \norm{\widetilde{\Pi}_V \chi_{U_1} w}_{L^2_{\Phi}(U)} \\
\leq {\cal O}(1) e^{-\frac{1}{Ch}} \norm{w}_{L^2_{\Phi}(V)} + \norm{\widetilde{\Pi}_V \chi_{U_1} w}_{L^2_{\Phi}(U)}
\leq {\cal O}(1) e^{-\frac{1}{Ch}} \norm{u}_{H_{\Phi}(V)} + \norm{\widetilde{\Pi}_V \chi_{U_1} w}_{L^2_{\Phi}(U)} \\
\leq {\cal O}(1) e^{-\frac{1}{Ch}} \norm{u}_{H_{\Phi}(V)} + {\cal O}(1)\norm{\chi_{U_1} w}_{L^2_{\Phi}(V)} \leq {\cal O}(1) e^{-\frac{1}{Ch}} \norm{u}_{H_{\Phi}(V)}.
\end{multline}
Here in the final estimate we have also used (\ref{eq5.20}) and (\ref{eq5.15}).

\bigskip
\noindent
Combining (\ref{eq5.17}), (\ref{eq5.18}), (\ref{eq5.19}), and (\ref{eq5.21}), we get
\begeq
\label{eq5.22}
\norm{(\widetilde{\Pi}_V -1)u}_{H_{\Phi}(U)} \leq {\cal O}(1) e^{-\frac{1}{Ch}} \norm{u}_{H_{\Phi}(V)}.
\endeq
The proof of Theorem \ref{Theorem1} is complete.

\end{document}